\numberwithin{equation}{section}
\newcommand{\Real}{\mathbb{R}}
\newcommand{\bone}{{\bf 1}}
\newcommand{\bzero}{{\bf 0}}
\newcommand{\ba}{{\bf a}}
\newcommand{\bb}{{\bf b}}
\newcommand{\bc}{{\bf c}}
\newcommand{\bd}{{\bf d}}
\newcommand{\be}{{\bf e}}
\newcommand{\bp}{{\bf p}}
\newcommand{\bt}{{\bf t}}
\newcommand{\bu}{{\bf u}}
\newcommand{\bv}{{\bf v}}
\newcommand{\bw}{{\bf w}}
\newcommand{\bx}{{\bf x}}
\newcommand{\by}{{\bf y}}
\newcommand{\bz}{{\bf z}}
\newcommand{\beeta}{{\boldsymbol \eta}}
\newcommand{\bmu}{{\boldsymbol \mu}}
\newcommand{\blambda}{{\boldsymbol \lambda}}
\newcommand{\bA}{{\bf A}}
\newcommand{\bB}{{\bf B}}
\newcommand{\bE}{{\bf E}}
\newcommand{\bI}{{\bf I}}
\newcommand{\bT}{{\bf T}}
\newcommand{\bV}{{\bf V}}
\newcommand{\bW}{{\bf W}}
\newcommand{\inprod}[2]{\left\langle{#1},{#2}\right\rangle}
\newcommand{\inprodmy}[2]{\langle{#1},{#2}\rangle}
\newcommand{\norm}[1]{\left\|{#1}\right\|}
\newcommand{\normmy}[1]{\|{#1}\|}
\newcommand{\lambdamin}[1]{\lambda_{\min}\left({#1}\right)}
\newcommand{\paren}[1]{\left\{{#1}\right\}}
\DeclareMathOperator{\conv}{conv}
\DeclareMathOperator{\ext}{ext}
\DeclareMathOperator{\rank}{rank}
\DeclareMathOperator{\image}{Im}
\DeclareMathOperator*{\argmin}{\arg\!\min}
\DeclareMathOperator*{\argmax}{\arg\!\max}
\newtheorem{theorem}{Theorem}[section]
\newtheorem{lemma}{Lemma}[section]
\newtheorem{remark}{Remark}[section]
\newtheorem{assumption}{\it Assumption} 
\newtheorem{defin}{Definition}[section]
\newtheorem{cor}{Corollary}[section]
\title{Linearly Convergent Away-Step Conditional Gradient for Non-strongly Convex Functions}
\author{Amir Beck\thanks{Faculty of Industrial Engineering and Management,
Technion - Israel Institute of Technology, Haifa, Israel. Email: becka@ie.technion.ac.il.} \and Shimrit Shtern\thanks{Faculty of Industrial Engineering and Management,
Technion - Israel Institute of Technology, Haifa, Israel. Email: shimrits@tx.technion.ac.il.}}
\begin{document}
\maketitle
\begin{abstract}
We consider the problem of minimizing a function, which is the sum of a linear function and a composition of a strongly convex function with a linear transformation, over a compact polyhedral set. Jaggi and Lacoste-Julien \cite{Jaggi2013} showed that the conditional gradient method with away steps employed on the aforementioned problem without the additional linear term has linear rate of convergence, depending on the so-called pyramidal width of the feasible set. We revisit this result and provide a variant of the algorithm and an analysis that is based on simple duality arguments, as well as corresponding error bounds. This new analysis (a) enables the incorporation of the additional linear term, (b) does not require a linear-oracle that outputs an extreme point of the linear mapping of the feasible set and (c) depends on a new constant,  termed ``the vertex-facet distance constant", which is explicitly expressed in terms of the problem's parameters and the geometry of the feasible set. This constant replaces the pyramidal width, which is difficult to evaluate.
\end{abstract}
\section{Introduction}
Consider the minimization problem
\begin{equation}\label{eq:Problem}\tag{P}
\begin{aligned}
\min_{\bx\in X} \left \{ f(\bx)\equiv g(\bE\bx)+ \inprod{\bb}{\bx} \right \},
\end{aligned}
\end{equation}
where $X \subseteq \Real^n$ is a compact polyhedral set, $\bE \in \Real^{m \times n}, \bb \in \Real^n$ and $g: \Real^m \rightarrow \Real$ is strongly convex and continuously differentiable over $\Real^m$.
Note that for a general matrix $\bE$, the function $f$ is not necessarily strongly convex.

When the problem at hand is large-scale, first order methods, which have relatively low computational cost per iteration, are usually utilized. These methods include, for example, the class of projected (proximal) gradient methods. A drawback of these methods is that under general convexity assumptions, they posses only a sublinear rate of convergence \cite{Nesterov2004,BT09}, while linear rate of convergence can be established only under additional conditions such as strong convexity of the objective function \cite{Nesterov2004}. Luo and Tseng \cite{LuoTseng1993} showed that the strong convexity assumption can be relaxed and replaced by an assumption on the existence of a local error bound, and under this assumption,  certain classes algorithms, which they referred to as ``feasible descent methods",  converge in an asymptotic linear time. The model (\ref{eq:Problem}) with assumptions on strong convexity of $g$, compactness and polyhedrality of $X$ was shown in \cite{LuoTseng1993} to satisfy the error bound. In~\cite{WangLin2014} Wang and Lin extended the work \cite{LuoTseng1993} and  showed that there exists a \textit{global} error bound for problem (\ref{eq:Problem}) with the additional assumption of compactness of $X$; and derived the exact linear rate for this case. We note that the family of ``feasible descent methods" include the block alternating minimization algorithm (under the assumption of block strong convexity), as well as gradient projection methods, and therefore are usually at least as complex as evaluating the orthogonal projection operator onto the feasible set $X$ at each iteration.

An alternative to algorithms which are based on projection (or proximal) operators are \emph{linear-oracle}-based algorithms such as the conditional gradient (CG) method. The CG algorithm was presented by Frank and Wolfe in 1956 \cite{FrankWolfe1956}, for minimizing a convex function over a compact polyhedral set. At each iteration, the algorithm requires a solution to the problem of minimizing a linear objective function over the feasible set. It is assumed that this solution is obtained by a call to a linear-oracle, i.e., a black box which, given a linear function, returns an optimal solution of this linear function over the feasible set (see an exact definition in Section~\ref{sec:CG}). In some instances, and specifically for certain types of polyhedral sets, obtaining such a linear-oracle can be done more efficiently than computing the orthogonal projection onto the feasible set (see examples in \cite{Garber2013}), and therefore the CG algorithm has an advantage over projection-based algorithms. The original paper of Frank and Wolfe also contained a proof of an $O(1/k)$ rate of convergence of the function values to the optimal value. Levitin and Polyak showed in \cite{levitin1966} that this $O(1/k)$ rate can also be extended to the case where the feasible set is a general compact convex set.  Cannon and Culum proved in \cite{CanonCulum1968} that this rate is in fact \textit{tight}. However, if in addition to strong convexity of the objective function, the optimal solution is in the interior of the feasible set, then linear rate of convergence of the CG method can be established\footnote{The paper \cite{Guelat1986} assumes that the feasible set is a  bounded polyhedral, but the proof  is actually correct for general compact convex sets.} \cite{Guelat1986}. Epelman and Freund \cite{EF00}, as well as Beck and Teboulle \cite{BT04} showed a linear rate of convergence of the conditional gradient with a special stepsize choice in the context of finding a point in the intersection of an affine space and a closed and convex set under a Slater-type assumption.  Another setting in which linear rate of convergence can be derived is when  the feasible set is uniformly (strongly) convex and the norm of the gradient of the objective function is bounded away from zero \cite{levitin1966}. \\
\indent Another approach for deriving a linear rate of convergence is to modify the algorithm. For example, Hazan and Garber used \textit{local} linear-oracles in \cite{Garber2013} in order to show linear rate of convergence of a ``localized" version of the conditional gradient method. A different modification, which is viable when the feasible set is a compact polyhedral, is to use a variation of the conditional gradient method that incorporates away steps. This version of the conditional gradient method, which we refer to as \emph{away steps conditional gradient} (ASCG), was initially suggested by Wolfe in \cite{Wolfe1970} and then studied by Guelat and Marcotte \cite{Guelat1986}, where a linear rate of convergence was established under the assumption that the objective function is strongly convex, as well as an assumption on the location of the optimal solution. In \cite{Jaggi2013} Jaggi and Lacoste-Julien were able to extend this result for the more general model  (\ref{eq:Problem}) for the case where $\bb=\bzero$, without restrictions on the location of the solution. We note that the ASCG requires that the linear-oracle will produce an optimal solution of the associated problem which is an extreme point. We will call such an oracle a \textit{vertex linear-oracle} (see the discussion in Section \ref{sec:VertexLinearOracles}).

\noindent {\bf Contribution.} In this work, our starting point and main motivation are the results of Jaggi and Lacoste-Julien \cite{Jaggi2013}. Our contribution is threefold:
\begin{itemize}
\item[(a)] We extend the results given in \cite{Jaggi2013} and show that the ASCG algorithm converges linearly for the general case of problem~\eqref{eq:Problem}, that is, for any value of $\bE$ and $\bb$. \\ \indent The additional linear term $\langle \bb,\bx \rangle$ enables us to consider much more general models. For example, consider the $l_1$-regularized least squares problem $\min_{\bx\in S} \{\norm{\bB \bx-\bc}^2+\lambda \|\bx\|_1\},$
where $S \subseteq \Real^n$ is a compact polyhedral, $\bB  \in \Real^{k \times n}, \bc \in \Real^k$ and $\lambda>0$. Since $S$ is compact, we can find a constant $M>0$ for which $\|\bx\|_1 \leq M$ for any $\bx \in S$. We can now rewrite the model as
$$\min_{\bx\in S, \|\bx\|_1 \leq y, y \in [0,M]} \norm{\bB \bx-\bc}^2+\lambda y,$$
which obviously fits the general model (\ref{eq:Problem})
\item[(b)] The analysis in \cite{Jaggi2013} assumes the existence of a  \textit{vertex} linear-oracle on the set $\bE X$, rather than an oracle for the set $X$. This fact is not significant for the ``pure" CG algorithm, since it only requires a linear-oracle and not a \textit{vertex} linear-oracle. This means that for the CG algorithm, a linear-oracle on $\bE X$ can be easily obtained by applying $\bE$ on the output of the linear-oracle on $X$. On the other hand, this argument fails for the ASCG algorithm that specifically requires the oracle to return an extreme point of the feasible set, and finding such a vertex linear-oracle on $\bE X$ might be a complex task , see Section \ref{sec:VertexLinearOracles} for more details. Our analysis only requires a vertex linear-oracle on the original set $X$.
    \item[(c)]  We present an analysis  based on simple duality arguments, which are completely different than the geometric arguments in \cite{Jaggi2013}. Consequently, we obtain a computable constant for the rate of convergence, which is explicitly expressed as a function of the problem's parameters and the geometry of the feasible set. This constant, which we call ``the vertex-facet distance constant",  replaces the so-called \emph{pyramidal width} constant from \cite{Jaggi2013}, which reflects the geometry of the feasible set and  is obtained as the optimal value of a very complex mixed integer saddle point optimization problem whose exact value is unknown even for simple polyhedral sets.
\end{itemize}

\noindent {\bf Paper layout.} The paper is organized as follows. Section~\ref{sec:Preliminaries} presents some preliminary results and definitions needed for the analysis. In particular, it provides a brief introduction to the classical CG algorithm and linear oracles. Section~\ref{sec:AwayStepConditionalGradient} presents the ASCG algorithm and the convergence analysis, and is divided into four subsections. In Section~\ref{sec:VertexLinearOracles} the concept of vertex linear-oracle, needed for the implementation of ASCG, is presented, and the difficulties of obtaining a vertex linear-oracle on a linear transformation of the feasible set are discussed. In Section~\ref{sec:ASCGmethod} we present the ASCG method with different possible stepsize choices. In Section~\ref{sec:RateConvergenceAnalysis}, we provide the rate of convergence analysis of the ASCG for problem~\eqref{eq:Problem}, and present the new \emph{vertex-facet distance} constant used in the analysis. Finally, in  Section~\ref{sec:FindingOmegaForPolyhedrons}, we demonstrate how to compute this new constant for a few examples of simple polyhedral sets.

{\bf Notations.}
We denote the cardinality of set $I$ by $|I|$. The difference, union and intersection of two given sets $I$ and $J$ are denoted by $I/J=\paren{a\in I: a\notin J}$, $I\cup J$ and $I\cap J$ respectively.
Subscript indices represent elements of a vector, while superscript indices represent iterates of the vector, i.e., $x_i$ is the $i$th element of vector $\bx$, $\bx^k$ is a vector at iteration $k$, and $x^k_i$ is the $i$th element of $\bx^k$. The vector $\be_i\in \Real^n$ is the $i$th vector of the standard basis of $\Real^n$, $\bzero\in \Real^n$ is the all-zeros vector, and $\bone\in \Real^n$ is the vector of all ones. Given two vectors $\bx,\,\by\in\Real^n$, their dot product is denoted by $\inprod{\bx}{\by}$. Given a matrix $\bA\in\Real^{m\times n}$ and vector $\bx\in\Real^n$, $\norm{\bA}$ denotes the spectral norm of $\bA$, and $\norm{\bx}$ denotes the $\ell_2$ norm of $\bx$, unless stated otherwise. $\bA^T$, $\rank(\bA)$ and $\image(\bA)$
represent the transpose, rank and image
of $\bA$ respectively. We denote the $i$th row of a given matrix $\bA$ by $\bA_i$, and given a set $I\subseteq \paren{1,\ldots,m}$, $\bA_I\in\Real^{|I|\times n}$ is the submatrix of $\bA$ such that $(\bA_I)_{j}=\bA_{I_j}$ for any $j=1,\ldots, |I|$. If $\bA$ is a symmetric matrix, then  $\lambdamin{\bA}$ is its minimal eigenvalue.
If a matrix $\bA$ is also invertible, we denote its inverse by $\bA^{-1}$. Given matrices $\bA\in\Real^{n\times m}$ and $\bB\in \Real^{n\times k}$, the matrix $[\bA,\bB]\in \Real^{n\times {(m+k)}}$ is their horizontal concatenation. Given a point $\bx$ and a closed convex set $X$, the distance between $\bx$ and $X$ is denoted by $d(\bx,X)=\min_{\by\in X}\norm{\bx-\by}$. The standard unit simplex in $\Real^n$ is denoted by $\Delta_n=\paren{\bx\in\Real_+^n: \inprod{\bone}{\bx}=1}$ and its relative interior by $\Delta^+_n=\paren{\bx\in\Real_{++}^n: \inprod{\bone}{\bx}=1}$. Given a set $X\subseteq\Real^n$, its convex hull is denoted by $\conv(X)$. Given a convex set $C$, the set of all its extreme points is denoted by $\ext(C)$.

\section{Preliminaries}\label{sec:Preliminaries}

\subsection{Mathematical Preliminaries}\label{sec:MathPreliminaries}
We start by presenting two technical lemmas. The first lemma is the well known \emph{descent lemma} which is fundamental in convergence rate analysis of first order methods. The second lemma is \emph{Hoffman's lemma} which is used in various error bound analyses over polyhedral sets.
\begin{lemma}[The Descent Lemma {\cite[Proposition A.24]{Bertsekas1999}}]\label{lemma:DescentLemma}
Let $f:\Real^n\rightarrow \Real$ be a continuously differentiable function with Lipschitz continuous gradient with constant $\rho$. Then for any $\bx,\by\in\Real^n$ we have
\begin{equation*}
f(\by)\leq f(\bx)+\inprod{\nabla f(\bx)}{\by-\bx}+\frac{\rho}{2}\norm{\bx-\by}^2
\end{equation*}
\end{lemma}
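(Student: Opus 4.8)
The statement is classical, and since the hypothesis is that $f$ is continuously differentiable on all of $\Real^n$, the natural route is to pass from $\bx$ to $\by$ along the line segment joining them and use the fundamental theorem of calculus. The plan is: (i) write $f(\by)-f(\bx)$ as an integral of the directional derivative along the segment; (ii) subtract the first-order term $\inprod{\nabla f(\bx)}{\by-\bx}$ to expose a difference of gradients inside the integral; (iii) bound the integrand by Cauchy--Schwarz and the $\rho$-Lipschitz property of $\nabla f$; (iv) evaluate the resulting scalar integral.

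Concretely, define $\varphi:[0,1]\rightarrow\Real$ by $\varphi(t)=f\big(\bx+t(\by-\bx)\big)$. Since $f$ is continuously differentiable, $\varphi$ is continuously differentiable with $\varphi'(t)=\inprod{\nabla f(\bx+t(\by-\bx))}{\by-\bx}$, so that
\begin{equation*}
f(\by)-f(\bx)=\varphi(1)-\varphi(0)=\int_0^1 \inprod{\nabla f(\bx+t(\by-\bx))}{\by-\bx}\,dt .
\end{equation*}
Subtracting $\inprod{\nabla f(\bx)}{\by-\bx}=\int_0^1\inprod{\nabla f(\bx)}{\by-\bx}\,dt$ from both sides gives
\begin{equation*}
f(\by)-f(\bx)-\inprod{\nabla f(\bx)}{\by-\bx}=\int_0^1 \inprod{\nabla f(\bx+t(\by-\bx))-\nabla f(\bx)}{\by-\bx}\,dt .
\end{equation*}

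Now apply the Cauchy--Schwarz inequality inside the integral and then the Lipschitz bound $\norm{\nabla f(\bx+t(\by-\bx))-\nabla f(\bx)}\le \rho\,\norm{t(\by-\bx)}=\rho t\norm{\by-\bx}$, to obtain
\begin{equation*}
f(\by)-f(\bx)-\inprod{\nabla f(\bx)}{\by-\bx}\le \int_0^1 \rho t\,\norm{\by-\bx}^2\,dt=\frac{\rho}{2}\norm{\by-\bx}^2 ,
\end{equation*}
which is the claimed inequality (note $\norm{\bx-\by}=\norm{\by-\bx}$). There is essentially no hard step here: the only points requiring a little care are justifying the integral representation of $\varphi(1)-\varphi(0)$ (immediate from continuous differentiability of $f$) and keeping the factor $t$ from the argument $t(\by-\bx)$ in the Lipschitz estimate, which is exactly what produces the constant $\rho/2$ rather than $\rho$. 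Alternatively one could invoke the cited reference \cite[Proposition A.24]{Bertsekas1999} directly, but the self-contained argument above is short enough to include.
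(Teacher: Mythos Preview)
Your proof is correct and is the standard argument for the Descent Lemma. The paper itself does not give a proof of this statement; it simply cites \cite[Proposition~A.24]{Bertsekas1999}, where the same line-integral argument appears.
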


\begin{lemma}[Hoffman's Lemma \cite{Hoffman1952}]\label{lemma:Hoffman}
Let $X$ be a polyhedron defined by $X=\paren{\bx\in\Real^n:\bA\bx\leq \ba}$, for some $\bA\in\Real^{m\times n}$ and $\ba\in\Real^{m}$, and let $S=\paren{\bx\in\Real^n:\tilde{\bE}\bx=\tilde{\be}}$ where $\tilde{\bE}\in\Real^{r\times n}$ and $\tilde{\be}\in\Real^r$. Assume that $X\cap S\neq \emptyset$. Then, there exists a constant $\theta$, depending only on $\bA$ and $\tilde{\bE}$, such that any $\bx\in X$ satisfies
$$d{(\bx,X\cap S)}\leq \theta \norm{\tilde{\bE}\bx-\tilde{\be}}.$$
\end{lemma}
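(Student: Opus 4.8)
Following the classical approach, the plan is to reduce Hoffman's lemma to the case of a single system of linear inequalities and then prove that case by a projection argument. First I would write
\[
X\cap S=\paren{\by\in\Real^n:\ \bA\by\le\ba,\ \tilde{\bE}\by\le\tilde{\be},\ -\tilde{\bE}\by\le-\tilde{\be}},
\]
so that $X\cap S$ is exactly the solution set of the single system $\bB\by\le\bbeta$, where $\bB=[\bA^T,\tilde{\bE}^T,-\tilde{\bE}^T]^T$ and $\bbeta=[\ba^T,\tilde{\be}^T,-\tilde{\be}^T]^T$; the key point is that $\bB$ depends only on $\bA$ and $\tilde{\bE}$. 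It then suffices to establish the following one-system version: for every nonempty polyhedron $P=\paren{\by\in\Real^n:\bB\by\le\bbeta}$ there is a constant $\theta$ depending only on $\bB$ such that $d(\bx,P)\le\theta\norm{(\bB\bx-\bbeta)_+}$ for all $\bx\in\Real^n$, where $(\cdot)_+$ denotes the entrywise nonnegative part. Granting this, for $\bx\in X$ the residuals of the rows $\bA\by\le\ba$ vanish (since $\bA\bx\le\ba$), while the rows coming from $\tilde{\bE}\by=\tilde{\be}$ produce the residual vector $(\tilde{\bE}\bx-\tilde{\be},\ \tilde{\be}-\tilde{\bE}\bx)$, whose nonnegative part has Euclidean norm exactly $\norm{\tilde{\bE}\bx-\tilde{\be}}$; hence $d(\bx,X\cap S)=d(\bx,P)\le\theta\norm{\tilde{\bE}\bx-\tilde{\be}}$, which is the assertion.

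For the one-system statement, fix $\bx$. If $\bx\in P$ then both sides are zero, so assume $\bx\notin P$ and let $\by$ be the Euclidean projection of $\bx$ onto $P$ (which exists because $P$ is nonempty, closed and convex). By the variational characterization of the projection, $\bx-\by$ belongs to the normal cone of $P$ at $\by$, which for a polyhedron is $\cone\paren{\bB_i^T:i\in I}$ with $I=\paren{i:\bB_i\by=\beta_i}$ the active set at $\by$; since $\bx\neq\by$, this set is nonempty and $\bx-\by$ has a nontrivial conic representation $\bx-\by=\sum_{i\in I}\lambda_i\bB_i^T$, $\lambda_i\ge0$. Applying Carath\'eodory's theorem for cones, I would discard redundant generators so that the surviving ones $\paren{\bB_i^T:i\in I'}$, $I'\subseteq I$, are linearly independent; then $\bM:=\bB_{I'}$ has full row rank, and with $\blambda:=(\lambda_i)_{i\in I'}\ge\bzero$ and $\bx-\by=\bM^T\blambda$ we get $\blambda=(\bM\bM^T)^{-1}\bM(\bx-\by)$, so $\norm{\blambda}\le\norm{(\bM\bM^T)^{-1}\bM}\,\norm{\bx-\by}$.

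Combining these facts, and using $\bB_i\by=\beta_i$ for $i\in I'$, $\lambda_i\ge0$, the elementary bound $t\le t_+$, and the Cauchy--Schwarz inequality,
\begin{align*}
\norm{\bx-\by}^2&=\inprod{\bx-\by}{\bM^T\blambda}=\sum_{i\in I'}\lambda_i(\bB_i\bx-\beta_i)\le\sum_{i\in I'}\lambda_i(\bB_i\bx-\beta_i)_+\\
&\le\norm{\blambda}\,\norm{(\bB\bx-\bbeta)_+}\le\norm{(\bM\bM^T)^{-1}\bM}\,\norm{\bx-\by}\,\norm{(\bB\bx-\bbeta)_+}.
\end{align*}
Dividing through by $\norm{\bx-\by}>0$ yields $d(\bx,P)=\norm{\bx-\by}\le\norm{(\bM\bM^T)^{-1}\bM}\,\norm{(\bB\bx-\bbeta)_+}$, and taking $\theta:=\max\norm{(\bB_J\bB_J^T)^{-1}\bB_J}$ over the finitely many row-index sets $J$ for which $\bB_J$ has full row rank completes the proof: this maximum is finite and depends only on $\bB$, hence only on $\bA$ and $\tilde{\bE}$. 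The step I expect to require the most care is the Carath\'eodory reduction to a linearly independent active subsystem; it is precisely what guarantees that $\bM\bM^T$ is invertible and, crucially, that $\theta$ can be taken as a maximum over only finitely many subsystems, so that it is a genuine constant independent of $\bx$, $\ba$ and $\tilde{\be}$. The remaining manipulations are routine.
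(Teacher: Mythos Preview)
Your proof is correct. The paper does not give its own proof of Hoffman's lemma; it cites \cite[pp.~299--301]{Guler2010}, where essentially the same projection-plus-Carath\'eodory argument you outline is carried out, and then records the resulting constant as $\theta=\max_{\bB\in\mathcal{B}}1/\lambdamin{\bB\bB^T}$ over linearly independent row submatrices of $[\tilde{\bE}^T,\bA^T]^T$. Your derivation yields $\max_J\norm{(\bB_J\bB_J^T)^{-1}\bB_J}=\max_J 1/\sqrt{\lambdamin{\bB_J\bB_J^T}}$, which differs from the paper's stated value only by a square root; since the lemma merely asserts the existence of some $\theta$ depending on $\bA$ and $\tilde{\bE}$, either constant suffices.
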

A complete and simple proof of this lemma is given in \cite[pg. 299-301]{Guler2010}. Defining $\mathcal{B}$ as the set of all matrices constructed by taking linearly independent rows from the matrix $\left[\tilde{\bE}^T,\bA^T\right]^T$, we can write $\theta$ as
$$\theta=\max_{\bB\in\mathcal{B}}\frac{1}{\lambdamin{\bB\bB^T}}.$$
We will refer to $\theta$ as the \emph{Hoffman constant} associated with matrix $\left[\tilde{\bE}^T,\bA^T\right]^T$.

\subsection{Problem's Properties}\label{sec:ProbProperties}
Throughout the article we make the following assumption regarding problem \eqref{eq:Problem}.
\begin{assumption}\label{ass:fx=gEx}
	\begin{enumerate}[label=(\alph*)] 
		\item\label{ass:f Lipschitz} $f$ is continuously differentiable and has a Lipschitz continuous gradient with constant $\rho$.
		\item\label{ass:g strongly convex} $g$ is strongly convex with parameter  $\sigma_g$.
		\item\label{ass:XPolyhedron} $X$ is a nonempty compact polyhedral set given by $X=\{\bx\in \Real^n: \bA\bx\leq \ba\}$ for some $\bA\in\Real^{m\times n}$, $\ba\in \Real^m$.
	\end{enumerate}
\end{assumption}

We denote the optimal solution set of problem \eqref{eq:Problem} by $X^*$.
The diameter of the compact set $X$ is denoted by $D$, and the diameter of the set $\bE X$ (the diameter of the image of $X$ under the linear mapping associated with matrix $\bE$) by $D_\bE$. The two diameters satisfy the following relation:
$$D_\bE=\max_{\bx,\by\in X}\norm{\bE\bx-\bE\by}\leq \norm{\bE}\max_{\bx,\by\in X }\norm{\bx-\by}=\norm{\bE}D,$$
We define  $G\equiv\max_{\bx\in X}\norm{\nabla g(\bE \bx)}$ to be the maximal norm of the gradient of $g$ over $\bE X$.

Problem~\eqref{eq:Problem} possesses some properties, which we present in the following lemmas.

\begin{lemma}[Lemma 14,\cite{WangLin2014}]\label{lemma:optimalIsSubspace}
	Let $X^*$ be the optimal set of problem \eqref{eq:Problem}. Then, there exists a constant vector $\bt^*$ and a scalar $s^*$ such that any optimal solution $\bx^*\in X^*$ satisfies $\bE\bx^*=\bt^*$ and $\inprod{\bb}{\bx^*}=s^*$.
\end{lemma}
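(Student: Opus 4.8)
The plan is to use the strong convexity of $g$ together with the fact that the remaining term $\inprod{\bb}{\bx}$ is affine, so it behaves as an equality in Jensen's inequality and cannot mask the strict gap produced by $g$. Denote by $f^*$ the optimal value of \eqref{eq:Problem}; it is attained since $X$ is compact and $f$ is continuous (Assumption~\ref{ass:fx=gEx}), so $X^*\neq\emptyset$. Fix two arbitrary optimal solutions $\bx_1^*,\bx_2^*\in X^*$ and let $\bar\bx=\tfrac12(\bx_1^*+\bx_2^*)$, which lies in $X$ by convexity of $X$.

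First I would record the midpoint form of strong convexity coming from Assumption~\ref{ass:fx=gEx}: for all $\bu,\bv\in\Real^m$,
\begin{equation*}
g\!\left(\frac{\bu+\bv}{2}\right)\le \frac12 g(\bu)+\frac12 g(\bv)-\frac{\sigma_g}{8}\norm{\bu-\bv}^2 .
\end{equation*}
Applying this with $\bu=\bE\bx_1^*$ and $\bv=\bE\bx_2^*$, and adding the identity $\inprod{\bb}{\bar\bx}=\tfrac12\inprod{\bb}{\bx_1^*}+\tfrac12\inprod{\bb}{\bx_2^*}$, gives
\begin{equation*}
f(\bar\bx)\le \frac12 f(\bx_1^*)+\frac12 f(\bx_2^*)-\frac{\sigma_g}{8}\norm{\bE\bx_1^*-\bE\bx_2^*}^2 = f^*-\frac{\sigma_g}{8}\norm{\bE\bx_1^*-\bE\bx_2^*}^2 .
\end{equation*}
Since $\bar\bx\in X$ we must have $f(\bar\bx)\ge f^*$, which forces $\norm{\bE\bx_1^*-\bE\bx_2^*}=0$, i.e., $\bE\bx_1^*=\bE\bx_2^*$.

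Hence $\bE\bx^*$ takes one common value over $X^*$; define $\bt^*$ to be this value. Finally, for every $\bx^*\in X^*$ we have $\inprod{\bb}{\bx^*}=f(\bx^*)-g(\bE\bx^*)=f^*-g(\bt^*)$, which is independent of $\bx^*$; setting $s^*=f^*-g(\bt^*)$ finishes the argument. I do not expect a real obstacle here; the only point needing care is the bookkeeping showing that the linear term $\inprod{\bb}{\cdot}$ enters the convexity inequality with equality, so that the only residual term is $\tfrac{\sigma_g}{8}\norm{\bE\bx_1^*-\bE\bx_2^*}^2$, which is exactly what pins down $\bE\bx^*$ on the optimal set.
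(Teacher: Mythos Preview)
Your argument is correct. The paper does not actually supply its own proof of this lemma; it cites it from \cite{WangLin2014} and merely remarks that the extension from polyhedral to general convex $X$ is trivial. Your midpoint strong-convexity argument is exactly the kind of short, self-contained proof one would expect here, and the final step recovering $s^*$ via $s^*=f^*-g(\bt^*)$ is the natural way to finish once $\bE\bx^*$ has been pinned down.
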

Although the proof of the lemma in the given reference is for polyhedral sets, the extension for any convex set is trivial.

\begin{lemma}\label{lemma:OFUpperBound}
Let $f^*$ be the optimal value of problem~\eqref{eq:Problem}. Then, for any $\bx\in X$
$$f(\bx)-f^*\leq {C}$$
where $C=GD_\bE+\norm{\bb}D$.
\end{lemma}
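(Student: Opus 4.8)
The statement bounds $f(\bx)-f^*$ for any feasible $\bx$ by $C = GD_\bE + \norm{\bb}D$. Since $f(\bx) = g(\bE\bx) + \inprod{\bb}{\bx}$, I would split the difference into the two natural pieces: the contribution from $g$ composed with $\bE$, and the contribution from the linear term. Fix an arbitrary optimal solution $\bx^*\in X^*$, so $f^* = g(\bE\bx^*) + \inprod{\bb}{\bx^*}$. Then
\[
f(\bx)-f^* = \bigl(g(\bE\bx) - g(\bE\bx^*)\bigr) + \inprod{\bb}{\bx-\bx^*}.
\]
The second term is immediately controlled: by Cauchy–Schwarz, $\inprod{\bb}{\bx-\bx^*} \le \norm{\bb}\,\norm{\bx-\bx^*} \le \norm{\bb} D$, since both $\bx,\bx^*\in X$ and $D$ is the diameter of $X$.

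For the first term, the idea is to use convexity of $g$ together with the bound on the gradient norm. Since $g$ is convex (it is strongly convex by Assumption~\ref{ass:fx=gEx}\ref{ass:g strongly convex}), we have the gradient inequality at the point $\bE\bx$:
\[
g(\bE\bx^*) \ge g(\bE\bx) + \inprod{\nabla g(\bE\bx)}{\bE\bx^* - \bE\bx},
\]
which rearranges to
\[
g(\bE\bx) - g(\bE\bx^*) \le \inprod{\nabla g(\bE\bx)}{\bE\bx - \bE\bx^*} \le \norm{\nabla g(\bE\bx)}\,\norm{\bE\bx - \bE\bx^*}.
\]
Now $\norm{\nabla g(\bE\bx)} \le G$ by the definition $G = \max_{\bx\in X}\norm{\nabla g(\bE\bx)}$, and $\norm{\bE\bx - \bE\bx^*} \le D_\bE$ since both $\bE\bx$ and $\bE\bx^*$ lie in $\bE X$, whose diameter is $D_\bE$. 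Hence $g(\bE\bx) - g(\bE\bx^*) \le G D_\bE$. Adding the two estimates gives $f(\bx) - f^* \le G D_\bE + \norm{\bb} D = C$, as claimed.

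**Main obstacle.** There is essentially no hard step here — the result is a routine combination of convexity, Cauchy–Schwarz, and compactness. The only point requiring a moment's care is making sure the quantities $G$ and $D_\bE$ are invoked with the correct feasibility: the gradient bound $G$ is a maximum over $\bx\in X$ (so it applies at the feasible point $\bx$), and $D_\bE$ bounds $\norm{\bE\bx - \bE\by}$ for $\bx,\by\in X$, which covers $\bx$ and $\bx^*$ since $X^*\subseteq X$. One could alternatively route the first-term bound through the Descent Lemma (Lemma~\ref{lemma:DescentLemma}), but that would introduce the Lipschitz constant $\rho$ and the (squared) diameter rather than $G$ and $D_\bE$, giving a different — and for this purpose less convenient — constant; the convexity argument is cleaner and yields exactly the stated $C$.
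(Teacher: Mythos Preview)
Your proof is correct and essentially the same as the paper's. The only cosmetic difference is that the paper applies convexity directly to $f$ at $\bx$ (obtaining $f(\bx)-f(\bx^*)\le\inprod{\nabla f(\bx)}{\bx-\bx^*}$ and then expanding $\nabla f(\bx)=\bE^T\nabla g(\bE\bx)+\bb$), whereas you split $f$ into its two summands first and apply convexity to $g$; since the linear term contributes no slack either way, the two routes coincide line for line.
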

\begin{proof}
Let $\bx^*$ be some optimal solution of  problem~\eqref{eq:Problem}, so that $f(\bx^*)=f^*$. Then for any $\bx\in X$, it follows from the convexity of $f$ that
\begin{equation*}
\begin{aligned}
f(\bx)-f(\bx^*)&\leq \inprod{\nabla f(\bx)}{\bx-\bx^*}\\
&=\inprod{\nabla g(\bE\bx)}{\bE\bx-\bE\bx^*}+\inprod{\bb}{\bx-\bx^*}\\
&\leq \norm{\nabla g(\bE\bx)}\norm{\bE\bx-\bE\bx^*}+\norm{\bb}\norm{\bx-\bx^*}\\
&\leq GD_\bE+\norm{\bb}D= C
\end{aligned}
\end{equation*}
where the last two inequalities are due to the Cauchy-Schwartz inequality and the definition of $G$,$D$ and $D_\bE$.
\end{proof}

The following lemma provides an \emph{error bound}, i.e., a bound on the distance of any feasible solution to the optimal set. This error bound will later be used as an alternative to a strong convexity assumption on $f$, which is usually needed in order to prove a linear rate of convergence. This is a different bound than the one given in \cite{WangLin2014}, since it relies heavily on the compactness of the set $X$, thus enabling to circumvent the use of the so-called gradient mapping.
\begin{lemma}\label{lemma:ErrBound}
For any $\bx\in X$,
$$d(\bx,X^*)^2\leq \kappa (f(\bx)-f^*),$$
where $\kappa=\theta^2 \left(\norm{\bb}D+3GD_\bE+\frac{2(G^2+1)}{\sigma_g}\right)$, and $\theta$ is the Hoffman constant associated with matrix $\left[\bA^T,\bE^T,\bb\right]^T$.
\end{lemma}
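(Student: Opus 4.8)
The plan is to realize $X^*$ as the intersection of the polyhedron $X$ with an affine subspace, apply Hoffman's lemma (Lemma~\ref{lemma:Hoffman}), and then bound the resulting two terms by a multiple of $f(\bx)-f^*$. First I would note that, by Lemma~\ref{lemma:optimalIsSubspace}, every $\bx^*\in X^*$ satisfies $\bE\bx^*=\bt^*$ and $\inprod{\bb}{\bx^*}=s^*$; conversely any $\bx\in X$ with $\bE\bx=\bt^*$ and $\inprod{\bb}{\bx}=s^*$ has $f(\bx)=g(\bt^*)+s^*=f^*$, hence lies in $X^*$. Therefore $X^*=X\cap\{\bx:\bE\bx=\bt^*,\ \inprod{\bb}{\bx}=s^*\}$, which is nonempty since $X$ is compact and $f$ continuous. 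Applying Lemma~\ref{lemma:Hoffman} with the equality system given by $[\bE^T,\bb]^T$ and right-hand side $(\bt^*;s^*)$ gives, for every $\bx\in X$,
\[
d(\bx,X^*)^2\le \theta^2\left(\norm{\bE\bx-\bt^*}^2+(\inprod{\bb}{\bx}-s^*)^2\right),
\]
where $\theta$ is the Hoffman constant of $[\bA^T,\bE^T,\bb]^T$ (the row ordering is irrelevant to this constant). It then suffices to bound each of the two terms on the right.

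For the first term I would combine strong convexity of $g$ with first-order optimality of a fixed $\bx^*\in X^*$: optimality of $\bx^*$ over the convex set $X$ gives $\inprod{\bE^T\nabla g(\bt^*)+\bb}{\bx-\bx^*}\ge0$, i.e. $\inprod{\nabla g(\bt^*)}{\bE\bx-\bt^*}\ge-(\inprod{\bb}{\bx}-s^*)$; substituting this into $g(\bE\bx)\ge g(\bt^*)+\inprod{\nabla g(\bt^*)}{\bE\bx-\bt^*}+\tfrac{\sigma_g}{2}\norm{\bE\bx-\bt^*}^2$ and adding $\inprod{\bb}{\bx}$ to both sides produces $f(\bx)\ge f^*+\tfrac{\sigma_g}{2}\norm{\bE\bx-\bt^*}^2$, that is, $\norm{\bE\bx-\bt^*}^2\le\tfrac{2}{\sigma_g}(f(\bx)-f^*)$.

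The second term, $(\inprod{\bb}{\bx}-s^*)^2$, is the crux, since the objective has no strong-convexity cushion in the direction of $\bb$; this is where compactness of $X$ must be used. Writing $u=\inprod{\bb}{\bx}-s^*$ and $h=g(\bE\bx)-g(\bt^*)$, so that $u=(f(\bx)-f^*)-h$ and $u^2=(f(\bx)-f^*)^2-2(f(\bx)-f^*)h+h^2$, I would bound the three summands separately: $(f(\bx)-f^*)^2\le C(f(\bx)-f^*)$ by Lemma~\ref{lemma:OFUpperBound} and $f(\bx)-f^*\ge0$; $|h|\le G\norm{\bE\bx-\bt^*}$ from convexity of $g$ and $\norm{\nabla g}\le G$ on $\bE X$, hence $h^2\le G^2\norm{\bE\bx-\bt^*}^2\le\tfrac{2G^2}{\sigma_g}(f(\bx)-f^*)$; and the cross term $-2(f(\bx)-f^*)h\le 2(f(\bx)-f^*)|h|\le 2(f(\bx)-f^*)G\norm{\bE\bx-\bt^*}\le 2GD_\bE(f(\bx)-f^*)$, using the crude bound $\norm{\bE\bx-\bt^*}\le D_\bE$. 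Summing, $u^2\le\left(C+2GD_\bE+\tfrac{2G^2}{\sigma_g}\right)(f(\bx)-f^*)=\left(\norm{\bb}D+3GD_\bE+\tfrac{2G^2}{\sigma_g}\right)(f(\bx)-f^*)$; adding the first-term bound $\tfrac{2}{\sigma_g}(f(\bx)-f^*)$ and multiplying through by $\theta^2$ yields exactly $d(\bx,X^*)^2\le\kappa(f(\bx)-f^*)$ with $\kappa=\theta^2\big(\norm{\bb}D+3GD_\bE+\tfrac{2(G^2+1)}{\sigma_g}\big)$. The point to be careful about is precisely the cross term: a naive $(a+b)^2\le2a^2+2b^2$ split wastes constants, so one should keep $-2(f(\bx)-f^*)h$ intact and bound it linearly in $f(\bx)-f^*$ through the diameter $D_\bE$ rather than through strong convexity.
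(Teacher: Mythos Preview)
Your proposal is correct and follows essentially the same approach as the paper: characterize $X^*$ via Lemma~\ref{lemma:optimalIsSubspace}, apply Hoffman's lemma, bound $\norm{\bE\bx-\bt^*}^2$ by strong convexity plus first-order optimality, and bound $(\inprod{\bb}{\bx}-s^*)^2$ by expanding a square into three pieces controlled respectively by $C$, $D_\bE$, and the strong-convexity bound. The only cosmetic difference is that you use the exact identity $\inprod{\bb}{\bx}-s^*=(f(\bx)-f^*)-(g(\bE\bx)-g(\bt^*))$ and then bound $|g(\bE\bx)-g(\bt^*)|\le G\norm{\bE\bx-\bt^*}$, whereas the paper first establishes the one-sided bound $|\inprod{\bb}{\bx}-s^*|\le (f(\bx)-f^*)+G\norm{\bE\bx-\bt^*}$ via two separate inequalities and then squares; the resulting three-term expansion and the estimates on each term are identical.
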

\begin{proof}
Lemma~\ref{lemma:optimalIsSubspace} implies that the optimal solution set $X^*$ can be defined as $X^*=X\cap S$ where $S=\paren{\bx\in \Real^n: \bE\bx=\bt^*,\;\inprod{\bb}{\bx}=s^*}$ for some $\bt^* \in \Real^m$ and $s^* \in \Real$. For any
$\bx\in X$, applying Lemma~\ref{lemma:Hoffman} with $\tilde{\bE}=\left[{\bE}^T,\bb\right]^T$, we have that
\begin{align}\label{eq:HoffmanForGivenProblem}
d(\bx,X^*)^2&\leq \theta^2 (\left(\inprod{\bb}{\bx}-s^*\right)^2+\norm{\bE\bx-\bt^*}^2),
\end{align}
where $\theta$ is the Hoffman constant associated with matrix $\left[\bA^T,\bE^T,\bb\right]^T$. Now, let $\bx \in X$ and $\bx^* \in X^*$.
Utilizing the $\sigma_g$-strong convexity of $g$, it follows that
\begin{equation}\label{eq:StrongConvexgEx}
\inprod{\nabla g(\bE\bx^*)}{\bE\bx-\bE\bx^*}+\frac{\sigma_g}{2}\norm{\bE\bx-\bE\bx^*}^2
\leq g(\bE\bx)-g(\bE\bx^*).\end{equation}
By the first order optimality conditions for problem~\eqref{eq:Problem}, we have (recalling that $\bx\in X$ and $\bx^*\in X^*$)
\begin{equation}\label{eq:FirstOrderOptimalityCond}
\inprod{\nabla f(\bx^*)}{\bx-\bx^*}\geq 0.\end{equation} Therefore,
\begin{equation}\label{eq:UBonStrongConvexPart0}
\begin{aligned}[b]
\frac{\sigma_g}{2}\norm{\bE\bx-\bt^*}^2&\leq \inprod{\nabla f(\bx^*)}{\bx-\bx^*}+\frac{\sigma_g}{2}\norm{\bE\bx-\bE\bx^*}^2\\
&=\inprod{\nabla g(\bE\bx^*)}{\bE\bx-\bE\bx^*}+\inprod{\bb}{\bx-\bx^*}+\frac{\sigma_g}{2}\norm{\bE\bx-\bE\bx^*}^2
\end{aligned}
\end{equation}
Now, using \eqref{eq:StrongConvexgEx} we can continue \eqref{eq:UBonStrongConvexPart0} to obtain
\begin{equation}\label{eq:UBonStrongConvexPart}
\begin{aligned}
\frac{\sigma_g}{2}\norm{\bE\bx-\bt^*}^2&\leq g(\bE\bx)-g(\bE\bx^*)+\inprod{\bb}{\bx}-\inprod{\bb}{\bx^*}=f(\bx)-f(\bx^*).\\
\end{aligned}
\end{equation}

We are left with the task of upper bounding $(\inprod{\bb}{\bx}-s^*)^2$. By the definitions of $s^*$ and $f$ we have that
\begin{equation}\label{eq:OptimalityConditions}
\begin{aligned}[b]
\inprod{\bb}{\bx}-s^*&=\inprod{\bb}{\bx-\bx^*}\\
&=\inprod{\nabla f(\bx^*)}{\bx-\bx^*}-\inprod{\nabla g(\bE\bx^*)}{\bE\bx-\bE\bx^*}\\
&=\inprod{\nabla f(\bx^*)}{\bx-\bx^*}- \inprod{\nabla g(\bt^*)}{\bE\bx-\bt^*}.
\end{aligned}\end{equation}
Therefore, using \eqref{eq:FirstOrderOptimalityCond}, \eqref{eq:OptimalityConditions} as well as the Cauchy-Schwartz inequality, we can conclude the following:
\begin{equation}\label{eq:UBonLinear0a}
s^*-\inprod{\bb}{\bx}\leq \inprod{\nabla g(\bt^*)}{\bE\bx-\bt^*}\leq \norm{\nabla g(\bt^*)}\norm{\bE\bx-\bt^*}.
\end{equation}
On the other hand, exploiting \eqref{eq:OptimalityConditions}, the convexity of $f$ and the Cauchy-Schwartz inequality, we also have that
\begin{equation}\label{eq:UBonLinear0b}
\begin{aligned}[b]
\inprod{\bb}{\bx}-s^*&=
\inprod{\nabla f(\bx^*)}{\bx-\bx^*}- \inprod{\nabla g(\bt^*)}{\bE\bx-\bt^*}\\
&\leq f(\bx)-f^*-\inprod{\nabla g(\bt^*)}{\bE\bx-\bt^*}\\
&\leq f(\bx)-f^*+\norm{\nabla g(\bt^*)}\norm{\bE\bx-\bt^*}.
\end{aligned}
\end{equation}
Combining \eqref{eq:UBonLinear0a}, \eqref{eq:UBonLinear0b}, and the fact that $f(\bx)-f^*\geq 0$, we obtain that
\begin{equation}\label{eq:UBonLinear1}
\begin{aligned}
(\inprod{\bb}{\bx}-s^*)^2&\leq \left(f(\bx)-f^*+\norm{\nabla g(\bt^*)}\norm{\bE\bx-\bt^*}\right)^2.\\
\end{aligned}
\end{equation}

Moreover, the definitions of $G$ and $D_\bE$ imply $\norm{\nabla g(\bt^*)}\leq G$, $\norm{\bE\bx-\bt^*}\leq D_\bE$, and since $\bx\in X$, it follows from Lemma~\ref{lemma:OFUpperBound} that $f(\bx)-f^*\leq C=GD_\bE+\norm{\bb}D$. Utilizing these bounds, as well as \eqref{eq:UBonStrongConvexPart} to bound \eqref{eq:UBonLinear1} results in
\begin{equation}\label{eq:UBOnLinearPart2}
\begin{aligned}[b]
(\inprod{\bb}{\bx}-s^*)^2&\leq \left(f(\bx)-f^*+G\norm{\bE\bx-\bt^*}\right)^2\\
&= (f(\bx)-f^*)^2+2G\norm{\bE\bx-\bt^*}(f(\bx)-f^*)+G^2\norm{\bE\bx-\bt^*}^2\\
&\leq (f(\bx)-f^*)C+2GD_\bE(f(\bx)-f^*)+G^2\frac{2}{\sigma_g}(f(\bx)-f^*)\\
&= (f(\bx)-f^*)\left(C+2GD_\bE+\frac{2G^2}{\sigma_g}\right)\\
&=(f(\bx)-f^*)\left(\norm{\bb}D+3GD_\bE+\frac{2G^2}{\sigma_g}\right).
\end{aligned}
\end{equation}
Plugging \eqref{eq:UBonStrongConvexPart} and \eqref{eq:UBOnLinearPart2} back into \eqref{eq:HoffmanForGivenProblem}, we obtain the desired result:
\begin{align*}
d(\bx,X^*)^2&\leq \theta^2 \left(\norm{\bb}D+3GD_\bE+\frac{2(G^2+1)}{\sigma_g}\right)(f(\bx)-f^*).
\end{align*}
\end{proof}

\subsection{Conditional Gradient and Linear Oracles} \label{sec:CG}

In order to present the CG algorithm, we first define the concept of linear oracles.

\begin{defin}[Linear Oracle]
	Given a set $X$, an operator $\mathcal{O}_X:\Real^n\rightarrow X$ is called a {\bf linear oracle} for $X$, if for each $\bc\in \Real^n$ it returns a vector $\bp\in X$ such that $\inprod{\bc}{\bp}\leq \inprod{\bc}{\bx}$ for any $\bx\in X$, i.e., $\bp$ is a minimizer of the linear function $\inprod{\bc}{\bx}$ over $X$.
\end{defin}

Linear oracles are black-box type functions, where the actual algorithm used in order to obtain the minimizer is unknown. For many feasible sets, such as $\ell_p$ balls and specific polyhedral sets, the oracle can be represented by a closed form solution or can be computed by an efficient method.

The CG algorithm and its variants are linear-oracle based algorithms.
The original CG algorithm, presented in \cite{FrankWolfe1956} -- also known as the Frank-Wolfe algorithm --  is as follows.
\begin{framed}\label{alg:CG}
	\setstretch{1.2}
	\noindent\underline{\bf Conditional Gradient Algorithm (CG)}\\	
	Input: A linear oracle ${\mathcal{O}}_{X}$\\	
	Initialize: $\bx^1\in X$ \newline	
	For $k=1,2,\ldots$
	\vspace{-\topsep}
	\begin{enumerate}	
		\itemsep0em 	
		\item Compute $\bp^k:={\mathcal{O}}_{X}(\nabla f(\bx^{k}))$.
		\item Choose a stepsize $\gamma^k$.
		\item Update $\bx^{k+1}:=\bx^{k}+\gamma^k(\bp^k-\bx^k)$.	
	\end{enumerate}
	\vspace{-\topsep}
\end{framed}
The algorithm is guaranteed to have an $O(\tfrac{1}{k})$ rate of convergence for stepsize determined according to exact line search \cite{FrankWolfe1956}, adaptive stepsize \cite{levitin1966} and predetermined stepsize \cite{Dunn1978}.
This upper bound on the rate of convergence is tight \cite{CanonCulum1968} and therefore variants, such as the ASCG were developed.

\section{Away Steps Conditional Gradient}\label{sec:AwayStepConditionalGradient}
The ASCG algorithm was proposed by Frank-Wolfe in \cite{Wolfe1970}. A linear convergence rate was proven for problems consisting of minimizing strongly convex objective functions over polyhedral feasible sets in \cite{Guelat1986} under some restrictions on the location of the optimal solution, and in \cite{Jaggi2013} without such restrictions. Jaggi and Lacoste-Julien \cite{Jaggi2013} showed that the latter result is also applicable for the specific case of problem~\eqref{eq:Problem} where $\bb=\bzero$ (or more generally $\bb\in \image(\bE)$), provided that an appropriate linear-oracle is available for the set $\bE X$. In this section, we extend this result for the general case of problem~\eqref{eq:Problem}, i.e., for any $\bE$ and $\bb$.  Furthermore, we explore the potential issues with obtaining a linear-oracle for the set $\bE X$, and suggest an alternative analysis, which only assumes existence of an appropriate linear-oracle on the original set $X$. Moreover, our analysis differs from the one presented in \cite{Jaggi2013} by the fact that it is based on duality rather than geometric arguments. This approach enables to derive a computable constant for the rate of convergence, which is explicitly expressed as a function of the problem's parameters and the geometry of the feasible set.

We separate the discussion of the ASCG into four sections. In Section~\ref{sec:VertexLinearOracles} we define the concept of \emph{vertex linear oracles}, which is needed for the ASCG method, and the issues of obtaining such an oracle for linear transformations of simple sets. Section~\ref{sec:ASCGmethod} contains a full description of the ASCG method itself, including the concept of vertex representation, and representation reduction. In Section~\ref{sec:RateConvergenceAnalysis} we present the rate of convergence analysis of the ASCG for problem~\eqref{eq:Problem}, as well as introduce the new computable convergence constant $\Omega_X$. Finally, in Section~\ref{sec:FindingOmegaForPolyhedrons} we demonstrate how to compute $\Omega_X$ for three types of simple sets.

\subsection{Vertex Linear Oracles}\label{sec:VertexLinearOracles}
The ASCG algorithm requires a linear oracle which is a \emph{vertex linear oracle}, a concept that we now define explicitly.
\begin{defin}[Vertex Linear Oracle]
	Given a polyhedral set $X$ with vertex set $V$, a linear oracle $\tilde{\mathcal{O}}_X:\Real^n\rightarrow V$ is called a {\bf vertex linear oracle} for $X$, if for each $\bc\in \Real^n$ it returns a vertex $\bp\in V$ such that $\inprod{\bc}{\bp}\leq \inprod{\bc}{\bx}$ for any $\bx\in X$.
\end{defin}
Notice that, according to the fundamental theorem of linear programming \cite[Theorem 2.7]{Bertsimas1997}, the problem of optimizing any linear objective function over the compact set $X$ always has an optimal solution which is a vertex. Therefore, the vertex linear oracle $\tilde{\mathcal{O}}_X$ is well defined. We also note that in this paper the term ``vertex" is synonymous with the term ``extreme point"

In \cite{Jaggi2013}, Jaggi and Lacoste-Julien proved that the ASCG algorithm is affine invariant. This means that given the problem \begin{equation}\label{eq:JaggiAffineInvariant}
\min_{\bx\in X}g(\bE\bx),
\end{equation} where $g$ is a strongly convex function and $\bE$ is some matrix, applying the ASCG algorithm on the equivalent problem \begin{equation}\label{eq:JaggiAffineInvariant2}\min_{\by\in Y} g(\by),\end{equation} where $Y=\bE X$, yields a linear rate of convergence, which depends only on the strong convexity parameter of $g$ and the geometry of the set $Y$ (regardless of what $\bE$ generated it). However, assuming that $\bE$ is not of a full column rank, i.e., $f$ is not strongly convex, retrieving an optimal solution $\bx^*\in X$ from the optimal solution $\by^*\in Y$ requires solving a linear feasibility problem. This feasibility problem is equivalent to solving the following constrained least squares problem:
$$\min_{\bx\in X}\norm{\bE\bx-\by^*}^2,$$ which, for a general $\bE$, may be more computationally expensive than simply applying the linear oracle on set $X$.  Moreover, in order to apply the algorithm to problem~\eqref{eq:JaggiAffineInvariant2}, a vertex linear oracle must be available for the set $Y=\bE X$. Assuming there exists a vertex linear oracle $\tilde{\mathcal{O}}_X$ for $X$, constructing such an oracle $\tilde{\mathcal{O}}_{\bE X}$ for $\bE X$ may incur an additional computational cost per iteration. A naive approach to construct a general linear oracle ${\mathcal{O}}_{\bE X}$, given  $\tilde{\mathcal{O}}_X$, is by the formula
\begin{equation}\label{eq:naive}{\mathcal{O}}_{\bE X}(\bc)=\bE\tilde{\mathcal{O}}_{X}(\bE^T\bc).
\end{equation} However, the output $\tilde{\bp}={\mathcal{O}}_{\bE X}(\bc)$ of this linear oracle is not guaranteed to  be a vertex of $\bE X$, and therefore, in order to obtain a vertex linear oracle $\tilde{\mathcal{O}}_{\bE X}(\bc)$, a vertex $\bp$ of $\bE X$ with the same objective function value as $\tilde{\bp}$ must still be found. As an example, take $X$ to be the unit box in three dimensions, $X=[-1,1]^3\subseteq \Real^3$, and let $\bE$ be given by
$$\bE=\begin{bmatrix}1& 1& 1\\ 1& 1& -1\\ 0& 0& 2\end{bmatrix}.$$
We denote the vertex set $V$ of the set $X$ by the letters A-H as follows:
\begin{align*}
A&=(1,1,1)^T,\; &B&=(1,1,-1)^T,\; &C&=(1,-1,-1)^T,\; &D&=(1,-1,1)^T,\\
E&=(-1,1,1)^T,\; &F&=(-1,-1,1)^T,\; &G&=(-1,1,-1)^T,\; &H&=(-1,-1,-1)^T,
\end{align*}
and the linear mappings of these vertices by the matrix $\bE$ by A'-H':
\begin{align*}
A'&=(3,1,2)^T,\; &B'&=(1,3,-2)^T,\; &C'&=G'=(-1,1,-2)^T,\\
F'&=(-1,-3,2)^T,\; &H'&=(-3,-1,-2)^T,\; &D'&=E'=(1,-1,2)^T.
\end{align*}
The vertex set of $\bE X$ is $\ext(\bE X)=\{A',B',F',H'\}$.

\begin{figure}[t]\label{fig:X and EX}
\caption{The sets $X$ and $\bE X$}
\includegraphics{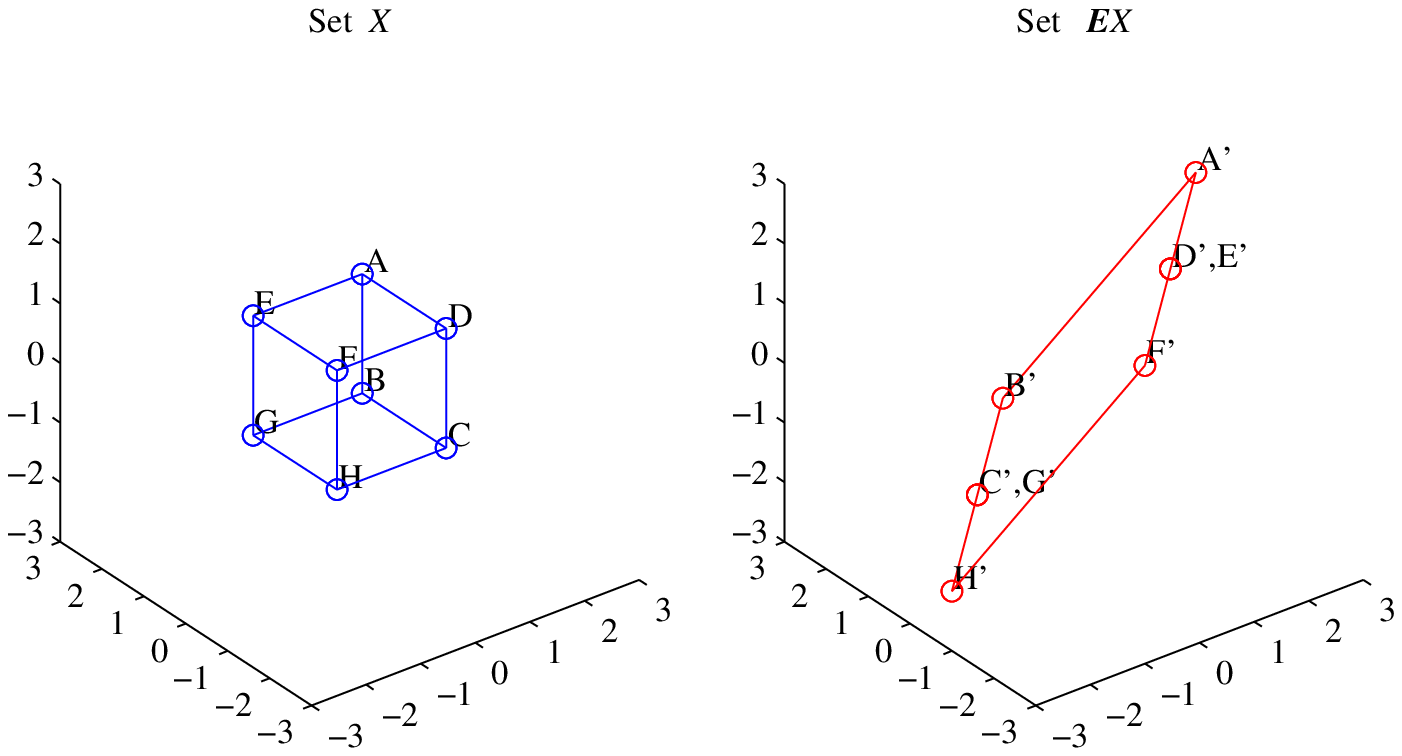}
\end{figure}

The sets $X$ and $\bE X$ are presented in Figure~\ref{fig:X and EX}. 
Notice that finding a vertex linear oracle for $X$ is trivial, while finding one for $\bE X$ is not. In particular, a vertex linear oracle for $X$ may be given by any operator $\tilde{\mathcal{O}}_X(\cdot)$ satisfying
\begin{equation}\label{eq:VertexLinearOracleForBox}
\tilde{\mathcal{O}}_X(\bc)\in\argmin_{\bx\in V}\paren{\inprod{\bc}{\bx}}=\paren{\bx\in \{-1,1\}^3:x_ic_i=-|c_i|,\,\forall i=1,\ldots,n},\quad \forall\; \bc \in \Real^3.
\end{equation}  Given the vector $\bc=(-1,\;1,\;3)^T$, we want to find $$\bp\in \argmin_{\by\in \ext(\bE X)} \inprod{\bc}{\by}.$$ Using the naive approach, described in \eqref{eq:naive}, we obtain a vertex of $X$ by applying the vertex linear oracle $\tilde{\mathcal{O}}_X$ described in \eqref{eq:VertexLinearOracleForBox} with parameter $\bE^T\bc=(0,\; 0 ,\; 1)$, which may return either one of the vertices B, C, G or H. If vertex C is returned, then its mapping C' does not yield a vertex in $\bE X$. Therefore, the oracle $\tilde{\mathcal{O}}_{\bE X}$ must now search for a vertex with the same objective function value, or alternatively, discover that C' lies on the face defined by B' and H', and consequently return one of these vertices. Obviously, this is true for any $\bc$ such that $\tilde{\mathcal{O}}_X(\bE^T\bc)$ returns one of the vertices C, D, E or G. This 3D example illustrates that, even for a simple $X$, understanding the geometry of the set $\bE X$, let alone constructing a vertex linear oracle over it, is not trivial and becomes more complicated as the dimension of the problem increases.

We aim to show that given a vertex linear oracle for $X$, the ASCG algorithm converges in a linear rate for problem~\eqref{eq:Problem}. Since in our analysis we do not assume the existence of a vertex linear oracle for $\bE X$, but rather a vertex linear oracle for $X$, the computational cost per iteration is independent of the matrix $\bE$, and depends only on the geometry of $X$.

\subsection{The ASCG Method}\label{sec:ASCGmethod}
We will now present the ASCG algorithm. In the following we denote the vertex set of $X$ as $V=\ext(X)$. Moreover, as part of the ASCG algorithm, at each iteration $k$ the iterate $\bx^k$ is represented as a convex combination of points in $V$. Specifically, $\bx^k$ is assumed to have the representation $$\bx^k=\sum_{\bv\in V} \mu^k_\bv\bv,$$ where $\bmu^k\in \Delta_{|V|}$.
Let $U^k=\paren{\bv\in V:\mu^k_\bv>0}$, then $U^k$ and $\paren{\mu^k_{\bv}}_{\bv\in U^k}$ provide a compact representation of $\bx^k$, and $\bx^k$ lies in the relative interior of the set $\conv(U^k)$. Throughout the algorithm we update $U^k$ and $\bmu^k$ via the vertex representation updating (VRU) scheme. The ASCG method has two types of updates: a \emph{forward step}, used in the classical CG algorithm, where a vertex is added to the representation, and an \emph{away step}, unique to this algorithm, in which the coefficient of one of the vertices used in the representation is reduced or even nullified. Specifically, the away step uses the direction $(\bx^k-\bu^k)$ where $\bu^k\in U^k$ and step size $\gamma^k>0$ so that
\begin{equation*}
\begin{aligned}
\bx^{k+1}&=\bx^k+\gamma^k(\bx^k-\bu^k)\\
&=(\bx^k-\mu^k_{\bu^k}\bu^k)(1+\gamma^k)+(\mu^k_{\bu^k}-\gamma^k(1-\mu^k_{\bu^k}))\bu^k\\
&=\sum_{\bv\in U^k/\paren{\bu^k}}(1+\gamma^k)\mu_{\bv}\bv+(\mu^k_{\bu^k}(1+\gamma^k)-\gamma^k)\bu^k,\\
\end{aligned}
\end{equation*}
and so $\mu^{k+1}_{\bu^k}=\mu^k_{\bu^k}-\gamma^k(1-\mu^k_{\bu^k})<\mu^k_{\bu^k}$. Moreover, if $\gamma^k=\frac{\mu^k_{\bu^k}}{1-\mu^k_{\bu^k}}$, then $\mu^{k+1}_{\bu^k}$ is nullified, and consequently, the vertex $\bu^k$ is removed from the representation. This vertex removal is referred to as a \emph{drop step}.

The full description of the ASCG algorithm and the VRU scheme is given as follows.
\begin{framed}\label{alg:ASCG}
	\setstretch{1.2}
	\noindent\underline{\bf Away Step Conditional Gradient algorithm (ASCG)}\\	
	Input: A vertex linear oracle $\tilde{\mathcal{O}}_{X}$\\	
	Initialize: $\bx^1\in V$ where $\mu^1_{\bx^1}=1$, $\mu^1_{\bv}=0$ for any $\bv\in V/\paren{\bx^1}$ and $U^1=\{\bx^1\}$\newline	
	For $k=1,2,\ldots$
	\vspace{-\topsep}
	\begin{enumerate}	
		\itemsep0em 	
		\item Compute $\bp^k:=\tilde{\mathcal{O}}_{X}(\nabla f(\bx^{k}))$.
		\item Compute $\bu^k\in \argmax\limits_{\bv\in U^k} \inprod{\nabla f(\bx^{k})}{\bv}$.
		\item If $\inprod{\nabla f(\bx^{k})}{\bp^k-\bx^k}\leq  \inprod{\nabla f(\bx^{k})}{\bx^k-\bu^k}$, then set $\bd^k:=\bp^k-\bx^k$ and $\overline{\gamma}^k:=1$.\\		
		Otherwise, set $\bd^k:=\bx^k-\bu^k$ and $\overline{\gamma}^k:=\frac{\mu^k_{\bu^k}}{1-\mu^k_{\bu^k}}$
		\item Choose a stepsize $\gamma^k$.
		\item Update $\bx^{k+1}:=\bx^{k}+\gamma^k\bd^k$.
\item Employ the VRU procedure with input $(\bx^k,U^k,\bmu^k,\bd^k,\gamma^k,\bp^k,\bv^k)$ and obtain an updated representation $(U^{k+1},\bmu^{k+1})$.
	\end{enumerate}
	\vspace{-\topsep}
\end{framed}
The stepsize in the ASCG algorithm can be chosen according to one of the following stepsize selection rules, where $\bd^k$ and $\overline{\gamma}^k$ are as defined in the algorithm.
\begin{equation}\label{eq:StepSize}
\gamma^k\begin{cases}
		\in \argmin\limits_{0\leq \gamma\leq \overline{\gamma}^k} f(\bx^k+\gamma\bd^k) & \text{Exact line search}\\
		=\min\paren{\!-\frac{\inprod{\nabla f(\bx^k)}{\bd^k}}{\rho\norm{\bd^k}^2},\overline{\gamma}^k}\!\in \!\argmin\limits_{0\leq \gamma\leq \overline{\gamma}^k} \left\{\!\gamma\inprod{\nabla f(\bx^k)}{\bd^k} +{\gamma}^2\frac{\rho}{2}\norm{\bd^k}^2\right\} &
		\text{Adaptive \cite{levitin1966}.}
		\end{cases}
\end{equation}
\begin{remark} \label{remark:monotone}
It is simple to show that under the above two choice of stepsize strategies, the sequence of function values $\{f(\bx^k)\}_{k \geq 1}$ is nonincreasing.
\end{remark}
Since the convergence rate analyses for both of these stepsize options is similar, we chose to conduct a unified analysis for both cases. Following is  exact definition of the VRU procedure.

\begin{framed}\label{alg:UpdatingSchemeForUkmuk}
	\setstretch{1.2}
	\noindent\underline{\bf Vertex Representation Updating (VRU) Procedure}\\
	{\bf Input:} $\bx^k$  - current point. \\ $(U^k,\bmu^k)$  -  vertex representation of $\bx^k$,\\
 $\bd^k,\gamma^k$ - current direction and stepsize,\\  $\bp^k, \bv^k$ - candidate vertices. \\
	{\bf Output:} Updated vertex representation $(U^{k+1},\bmu^{k+1})$ of $\bx^{k+1}=\bx^k+\gamma^k\bd^k$.\\
	If $\bd^k=\bx^k-\bu^k$ (away step) then
	\vspace{-\topsep}
	\begin{enumerate}
		\itemsep0em
		\item Update $\mu^{k+1}_{\bv}:=\mu^{k}_{\bv}(1+\gamma^k)$ for any ${\bv}\in U^{k}/\paren{\bu^k}$.
		\item Update $\mu^{k+1}_{\bu^k}:=\mu^{k}_{\bu^k}(1+\gamma^k)-\gamma^k$.
		\item If $\mu^{k+1}_{\bu^k}=0$ (drop step), then update $U^{k+1}:=U^k/\paren{\bu^k}$, otherwise $U^{k+1}:=U^k$.
	\end{enumerate}
	\vspace{-\topsep}
	Else ($\bd^k=\bp^k-\bx^k$ - forward step)
	\vspace{-\topsep}
	\begin{enumerate}
		\itemsep0em
		\item Update $\mu^{k+1}_{\bv}:=\mu^{k}_{\bv}(1-\gamma^k)$ for any ${\bv}\in U^k/\paren{\bp^k}$.
		\item Update $\mu^{k+1}_{\bp^k}:=\mu^{k}_{\bp^k}(1-\gamma^k)+\gamma^k$.
		\item If $\mu^{k+1}_{\bp^k}=1$, then update $U^{k+1}=\paren{\bp^k}$, otherwise update $U^{k+1}:=U^k\cup \paren{\bp^k}$. 		
	\end{enumerate}	
	\vspace{-\topsep}
	Update $(U^{k+1},\bmu^{k+1}):=\mathcal{R}(U^{k+1},\bmu^{k+1})$ with $\mathcal{R}$ being a representation reduction procedure with constant $N$.
\end{framed}
\noindent The VRU scheme uses a representation reduction procedure $\mathcal{R}$ with constant $N$, which is a procedure that takes a representation $(U,\bmu)$ of a point $\bx$ and replaces it by a representation $(\tilde{U},\tilde{\bmu})$ of $\bx$ such that $\tilde{U}\subseteq U$ and $|\tilde{U}|\leq N$. We consider two possible options for the representation reduction procedure:
 \begin{enumerate} \item $\mathcal{R}$ is the trivial procedure, meaning it does not change the representation, in which case its constant is $N=|V|$.
  \item The procedure $\mathcal{R}$ is some implementation of the Carath\'{e}odory theorem \cite[Section 17]{Rockafellar1970}, in which case its constant is $N=n+1$. Using this option will accelerate the algorithm when the number of vertices is not polynomial in the problem's dimension. A full description of the incremental representation reduction (IRR) scheme, which applies the Carath\'{e}odory theorem efficiently in this context, is presented in Appendix~\ref{appx:Caratheodory}.
\end{enumerate}
\subsection{Rate of Convergence Analysis}\label{sec:RateConvergenceAnalysis}
We will now prove the linear rate of convergence for the ASCG algorithm for problem \eqref{eq:Problem}.
In the following we use $I(\bx)$ to denote the \emph{index set of the active constraints at $\bx$},
$$I(\bx)=\paren{i\in\paren{1,\ldots,n}: \bA_i\bx=a_i}.$$
Similarly, for a given set $U$, the set of active constraints for all the points in $U$ is defined as
$$I(U)=\paren{i\in\paren{1,\ldots,n}: \bA_i\bv=a_i,\;\forall \bv\in U}=\bigcap_{\bv\in U} I(\bv).$$

We present the following technical lemma, which is similar to a result presented by
Jaggi and Lacoste-Julien \cite{Jaggi2013}\footnote{This was done as part of the proof of \cite[Lemma 6]{Jaggi2013}, and does not appear as a separate lemma.}. In \cite{Jaggi2013} the proof is based on geometrical considerations, and utilizes the so-called  ``pyramidal width constant", which is the optimal value of a complicated optimization problem, whose value is unknown even for simple sets such as the unit simplex. In contrast, the proof below relies on simple linear programming duality arguments, and in addition, the derived constant $\Omega_X$, which replaces the pyramidal width constant, is computable for a many choices of sets $X$.
\begin{lemma}\label{lemma:PyrWidth2}
Given $U\subseteq V$ and $\bc\in\Real^n$. If there exists a $\bz\in\Real^n$ such that $\bA_{I(U)}\bz\leq 0$ and $\inprod{\bc}{\bz}>0$,
then
$$\max_{\bp\in V, \bu\in U}\inprod{\bc}{\bp-\bu}\geq \frac{\Omega_X}{|U|}\frac{\inprod{\bc}{\bz}}{\norm{\bz}}$$
where
\begin{equation} \label{defOmega} \Omega_X=\frac{\zeta}{\varphi}\end{equation}
for
\begin{equation*}
\begin{aligned}
\zeta&=\min\limits_{\bv\in V,i\in\paren{1,\ldots,m}:a_i>\bA_i\bv}(a_i-\bA_i\bv),\\
\varphi&=\max\limits_{i\in \paren{1,\ldots,m}/I(V)} \norm{\bA_i}.
\end{aligned}
\end{equation*}
\end{lemma}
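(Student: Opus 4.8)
The plan is to establish the bound by constructing an explicit feasible point of a suitable optimization problem and using linear programming duality. The key observation is that the left-hand side $\max_{\bp\in V,\bu\in U}\inprod{\bc}{\bp-\bu}$ can be split as $\max_{\bp\in V}\inprod{\bc}{\bp} - \min_{\bu\in U}\inprod{\bc}{\bu}$, or equivalently, since both $\bp$ and $\bu$ range over vertices, it equals $\max_{\bp\in V,\bu\in U}\inprod{\bc}{\bp-\bu} = \bigl(\max_{\bx\in X}\inprod{\bc}{\bx}\bigr) - \bigl(\min_{\bx\in\conv(U)}\inprod{\bc}{\bx}\bigr)$. So I would first reduce to lower-bounding this difference of two linear programs. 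The vector $\bz$ with $\bA_{I(U)}\bz\le 0$ and $\inprod{\bc}{\bz}>0$ is a feasible direction along which $\inprod{\bc}{\cdot}$ increases while staying (at least infinitesimally, and in fact respecting the constraints active on all of $U$) inside the polyhedron near points of $U$.

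First I would pick an arbitrary $\bu\in U$ and follow the ray $\bu + t\bz$. Since $\bA_{I(U)}\bz\le 0$, the constraints in $I(U)$ stay satisfied for all $t\ge 0$; the constraints $i\notin I(U)$ are strictly slack at $\bu$ (if $i\notin I(\bu)$) with slack at least $\zeta$ by definition, or already handled. Stepping a distance $t^\star$ that is controlled from below by $\zeta/\varphi$ — because each inactive constraint $i$ is violated only after $t$ exceeds $(a_i-\bA_i\bu)/(\bA_i\bz) \ge \zeta/(\norm{\bA_i}\norm{\bz}) \ge \zeta/(\varphi\norm{\bz})$ — gives a point $\bu + t^\star\bz \in X$ with $\inprod{\bc}{\bu+t^\star\bz} = \inprod{\bc}{\bu} + t^\star\inprod{\bc}{\bz} \ge \inprod{\bc}{\bu} + \tfrac{\zeta}{\varphi}\tfrac{\inprod{\bc}{\bz}}{\norm{\bz}}$. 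Hence $\max_{\bp\in V}\inprod{\bc}{\bp} \ge \max_{\bx\in X}\inprod{\bc}{\bx} \ge \inprod{\bc}{\bu} + \Omega_X\tfrac{\inprod{\bc}{\bz}}{\norm{\bz}}$ for every $\bu\in U$. Taking $\bu$ to be the minimizer of $\inprod{\bc}{\bu}$ over $U$, and then bounding $\min_{\bu\in U}\inprod{\bc}{\bu} \le \inprod{\bc}{\bx^k}$-type averaging — actually one needs $\min_{\bu\in U}\inprod{\bc}{\bu} \le \frac{1}{|U|}\sum$, no: the cleaner route is that the away-vertex $\bu^\star=\arg\max_{\bu\in U}\inprod{\bc}{\bu}$ satisfies $\inprod{\bc}{\bu^\star}\ge \frac{1}{|U|}\sum_{\bu\in U}\inprod{\bc}{\bu} \ge \inprod{\bc}{\bar\bu}$ for any convex combination $\bar\bu$ of $U$ — so choosing the convex-combination point appropriately recovers the factor $1/|U|$. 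I would formalize this by applying the ray argument at the point $\bu_{\min}=\arg\min_{\bu\in U}\inprod{\bc}{\bu}$ and noting $\max_{\bu\in U}\inprod{\bc}{\bu} - \inprod{\bc}{\bu_{\min}} \ge 0$, while the gain $\Omega_X\inprod{\bc}{\bz}/\norm{\bz}$ from the forward step must be shared; the factor $|U|$ enters because $\inprod{\bc}{\bz}>0$ only guarantees increase relative to the *average* over $U$, not relative to $\bu_{\min}$, so one compares against $\bar\bu = \frac{1}{|U|}\sum_{\bu\in U}\bu$ which lies in $\conv(U)\subseteq X$, runs the ray from there (its active set contains $I(U)$), and gets $\max_{\bp}\inprod{\bc}{\bp} \ge \inprod{\bc}{\bar\bu} + \Omega_X\inprod{\bc}{\bz}/\norm{\bz}$, then uses $\inprod{\bc}{\bar\bu} \ge \min_{\bu}\inprod{\bc}{\bu}$ and $\max_{\bu}\inprod{\bc}{\bu}-\min_{\bu}\inprod{\bc}{\bu}\ge \tfrac{1}{|U|}\bigl(\max_\bu\inprod{\bc}{\bu}-\inprod{\bc}{\bar\bu}\bigr)\cdot$(something) — this is the delicate bookkeeping.

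The main obstacle, and the step I would spend the most care on, is getting the factor $1/|U|$ exactly right: the geometric ray argument naturally produces a bound of the form $\max_{\bp\in V}\inprod{\bc}{\bp} - \inprod{\bc}{\bw} \ge \Omega_X\inprod{\bc}{\bz}/\norm{\bz}$ where $\bw$ is a point of $\conv(U)$ whose active set includes $I(U)$ (so that $\bw+t\bz$ stays feasible), and one must convert $\inprod{\bc}{\bw}$ into $\min_{\bu\in U}\inprod{\bc}{\bu}$ while only losing a factor $|U|$. I would choose $\bw$ to be the barycenter of $U$: then $\inprod{\bc}{\bw} = \frac1{|U|}\sum_{\bu\in U}\inprod{\bc}{\bu}$, and since $\max_{\bu\in U}\inprod{\bc}{\bu-\bu'} \ge \inprod{\bc}{\bu^\star - \bw}$ for $\bu^\star$ the maximizer, combined with $\inprod{\bc}{\bu^\star}\ge\inprod{\bc}{\bw}$ one gets $|U|\max_{\bp\in V,\bu\in U}\inprod{\bc}{\bp-\bu} \ge |U|\bigl(\max_{\bp}\inprod{\bc}{\bp} - \inprod{\bc}{\bw}\bigr) \ge |U|\,\Omega_X\inprod{\bc}{\bz}/\norm{\bz}$? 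No — that would give no $|U|$ loss. The actual loss comes from $\min_{\bu\in U}\inprod{\bc}{\bu} \le \inprod{\bc}{\bw}$ but we need a *lower* bound on $\max_{\bu\in U}\inprod{\bc}{\bu}$; writing $\inprod{\bc}{\bw}$ as the average, $\max_{\bu}\inprod{\bc}{\bu}\ge\inprod{\bc}{\bw}$ trivially, so $\max_{\bp}\inprod{\bc}{\bp}-\max_{\bu}\inprod{\bc}{\bu}$ could be small; instead one bounds $\max_{\bp}\inprod{\bc}{\bp}-\max_\bu\inprod{\bc}{\bu} \ge \max_\bp\inprod{\bc}{\bp} - \inprod{\bc}{\bw} - (\max_\bu\inprod{\bc}{\bu}-\inprod{\bc}{\bw})$, and the last parenthesis is at most $\max_\bu\inprod{\bc}{\bu}-\min_\bu\inprod{\bc}{\bu} \le |U|\max_{\bp\in V,\bu\in U}\inprod{\bc}{\bp-\bu}$ (telescoping/trivial since $\min_\bu\inprod{\bc}{\bu}\ge\min_{\bp\in V}\inprod{\bc}{\bp}\ge -\max_{\bp}\inprod{\bc}{\bp}$). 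Rearranging yields $(|U|+1)\max_{\bp\in V,\bu\in U}\inprod{\bc}{\bp-\bu}\ge\Omega_X\inprod{\bc}{\bz}/\norm{\bz}$, which after absorbing constants gives the claimed inequality with $|U|$ in the denominator. I expect the authors' proof to handle this via LP duality on the problem $\max\{\inprod{\bc}{\bp-\bu}: \bp\in X, \bu\in\conv(U)\}$ more slickly, but the ray construction above is the geometric heart of the argument.
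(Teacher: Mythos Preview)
Your geometric ray idea is correct and is, in disguise, exactly what the paper does: the paper's two applications of LP duality land on the translated problem $\max\{\inprod{\bc}{\bx}:\bA\bx\le\ba-\bA\bar\by\}$ with $\bar\by=\tfrac{1}{|U|}\sum_{\bv\in U}\bv$, which is nothing but the change of variables $\bx\mapsto\bx+\bar\by$ in your ``ray from the barycenter'' picture. So the approaches coincide; the duality is not essential.

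The genuine gap in your write-up is the source of the factor $1/|U|$. You claim the slack at the barycenter for $i\notin I(U)$ is at least $\zeta$, and then try to manufacture the $1/|U|$ through a bookkeeping argument comparing $\min_{\bu}\inprod{\bc}{\bu}$, $\max_{\bu}\inprod{\bc}{\bu}$, and $\inprod{\bc}{\bar\by}$. That bookkeeping does not close (your final ``$(|U|+1)\max\ldots\ge\Omega_X\ldots$, then absorb constants'' is not a proof of the stated inequality). The correct observation is simpler: for $i\notin I(U)$,
\[
a_i-\bA_i\bar\by=\frac{1}{|U|}\sum_{\bv\in U}(a_i-\bA_i\bv)\ge\frac{\zeta}{|U|},
\]
because every summand is nonnegative and at least one of them is $\ge\zeta$ (the others may be zero, which is exactly why you cannot start the ray from an arbitrary $\bu\in U$: a constraint $i\notin I(U)$ can still lie in $I(\bu)$). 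Thus the admissible step along $\bz$ from $\bar\by$ is at least
\[
t^\star=\frac{\zeta}{|U|\,\varphi\,\|\bz\|}=\frac{\Omega_X}{|U|\,\|\bz\|},
\]
yielding $\max_{\bx\in X}\inprod{\bc}{\bx}\ge\inprod{\bc}{\bar\by}+\tfrac{\Omega_X}{|U|}\tfrac{\inprod{\bc}{\bz}}{\|\bz\|}$. Combining with the trivial $\min_{\bu\in U}\inprod{\bc}{\bu}\le\inprod{\bc}{\bar\by}$ gives the lemma directly, with no further bookkeeping. In short: the $1/|U|$ lives in the slack at the barycenter, not in any averaging of $\inprod{\bc}{\bu}$.
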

\begin{proof}
By the fundamental theorem of linear programming \cite{Goldfarb1989}, we can  maximize the function $\inprod{\bc}{\bx}$ on $X$ instead of on $V$ and get the same optimal value. Similarly, we can minimize the function $\inprod{\bc}{\by}$ on $\conv(U)$ instead of on $U$, and obtain the same optimal value. Therefore,
\begin{equation}\label{eq:LemmaPyrWidth2_DiscreteToContLinearProgram}
\begin{aligned}[b]
\max_{\bp\in V, \bu\in U}\inprod{\bc}{\bp-\bu}&=\max_{\bp\in V}\inprod{\bc}{\bp}-\min_{\bu\in U}\inprod{\bc}{\bu}\\
&=\max_{\bx\in X} \inprod{\bc}{\bx}-\min_{\by\in \conv(U)}\inprod{\bc}{\by}\\
&=\max_{\bx:\bA\bx\leq\ba} \inprod{\bc}{\bx}+\max_{\by\in \conv(U)}\paren{-\inprod{\bc}{\by}}.
\end{aligned}
\end{equation}

Since $X$ is nonempty and bounded, the problem in $\bx$ is feasible and bounded above. Therefore, by strong duality for linear programming,
\begin{equation}\label{eq:LemmaPyrWidth2_Duality}
\max_{\bx:\bA\bx\leq\ba} \inprod{\bc}{\bx}=\min_{\beeta\in\Real^m_+: \bA^T\beeta=\bc} \inprod{\ba}{\beeta}.
\end{equation}
Plugging \eqref{eq:LemmaPyrWidth2_Duality} back into \eqref{eq:LemmaPyrWidth2_DiscreteToContLinearProgram} we obtain:
\begin{equation}\label{eq:LemmaPyrWidth2_MinMax}
\begin{aligned}[b]
\max_{\bp\in V, \bu\in U}\inprod{\bc}{\bp-\bu}&=\min_{\beeta\in\Real^m_+: \bA^T\beeta=\bc} \inprod{\ba}{\beeta}+\max_{\by\in \conv(U)}\paren{-\inprod{\bc}{\by}}\\
&=\min_{\beeta\in\Real^m_+: \bA^T\beeta=\bc}\max_{\by\in \conv(U)} \inprod{\ba-\bA\by}{\beeta}.
\end{aligned}
\end{equation}
Since $\overline{\by}=\frac{1}{|U|}\sum_{\bv\in U}\bv$ is in $\conv(U)$, we have that
$$\max_{\by\in \conv(U)} \inprod{\ba-\bA\by}{\beeta}\geq \inprod{\ba-\bA\overline{\by}}{\beeta}$$
for any value of $\beeta$, and therefore,
\begin{equation}\label{eq:LemmaPyrWidth2_MinMax2}
\min_{\beeta\in\Real^m_+: \bA^T\beeta=\bc}\max_{\by\in \conv(U)} \inprod{\ba-\bA\by}{\beeta}\geq \min_{\beeta\in\Real^m_+: \bA^T\beeta=\bc}\inprod{\ba-\bA\overline{\by}}{\beeta}.
\end{equation}

Using strong duality on the RHS of \eqref{eq:LemmaPyrWidth2_MinMax2}, we obtain that
\begin{equation}\label{eq:LemmaPyrWidth2_Duality2}
\begin{aligned}
\min_{\beeta\in\Real^m_+: \bA^T\beeta=\bc} \inprod{\ba-\bA\overline{\by}}{\beeta}&=\max_{\bx}\paren{\inprod{\bc}{\bx}: \bA\bx\leq (\ba-\bA\overline{\by})}.
\end{aligned}
\end{equation}
Denote $J=I(U)$ and $\overline{J}=\paren{1,\ldots,m}/J$. From the definition of $I(U)$, it follows that
\begin{equation} \label{539} \ba_J-\bA_J\bv=\bzero\end{equation}  for all $\bv\in U$, and that for any $i\in \overline{J}$ there exists at least one vertex $\bv\in U$ such that $a_i-\bA_i\bv>0$, and hence, $$a_i-\bA_i\bv\geq \min_{\bu\in V, j\in\paren{1,\ldots,m}: \ba_j>\bA_j\bu}(a_j-\bA_j\bu)=\zeta>0,$$
which in particular implies that
\begin{equation} \label{541} \sum_{\bv \in U} (a_i -\bA_i \bv) \geq \zeta>0.\end{equation}
Since $\overline{\by}\in \conv(U)$, we can conclude from (\ref{539}) and (\ref{541}) that
\begin{equation}\label{eq:LemmaPyrWidth2_ChoiceOfy}
\begin{aligned}
\ba_J-\bA_J\overline{\by} &=\bzero\\
\ba_{\overline{J}}-\bA_{\overline{J}}\overline{\by}&=\frac{1}{|U|}\sum_{\bv\in U}(\ba_{\overline{J}}-\bA_{\overline{J}}\bv)\geq \bone\frac{\zeta}{|U|}.
\end{aligned}
\end{equation}
Therefore, replacing the RHS of the set of inequalities $\bA\bx\leq (\ba-\bA\overline{\by})$ in \eqref{eq:LemmaPyrWidth2_Duality2} by the bounds given in \eqref{eq:LemmaPyrWidth2_ChoiceOfy}, we obtain that
\begin{equation}\label{eq:LemmaPyrWidth2_Duality3}
\begin{aligned}
\max_{\bx}\paren{\inprod{\bc}{\bx}: \bA\bx\leq (\ba-\bA\overline{\by})}
&\geq \max_{\bx}\paren{\inprod{\bc}{\bx}: \bA_J\bx\leq \bzero,\;\bA_{\overline{J}}\bx\leq \bone\frac{\zeta}{|U|}}.
\end{aligned}
\end{equation}

Combining \eqref{eq:LemmaPyrWidth2_MinMax},\eqref{eq:LemmaPyrWidth2_MinMax2}, \eqref{eq:LemmaPyrWidth2_Duality2} and \eqref{eq:LemmaPyrWidth2_Duality3} it follows that
\begin{equation}\label{eq:LemmaPyrWidth2_Final}
\max_{\bp\in V, \bu\in U}\inprod{\bc}{\bp-\bu}
\geq Z^*,
\end{equation}
where
\begin{equation}\label{eq:LemmaPyrWidth2_Final2}
Z^*=\max_{\bx}\paren{\inprod{\bc}{\bx}:\bA_J\bx\leq \bzero,\;\bA_{\overline{J}}\bx\leq \bone \frac{\zeta}{|U|}}.
\end{equation}
We will now show that it is not possible for $\bz$ to satisfy $\bA_{\overline{J}}\bz\leq 0$. Suppose by contradiction $\bz$ satisfies does satisfy $\bA_{\overline{J}}\bz\leq 0$. Then ${\bx}_{\alpha}=\alpha\bz$ is a feasible solution of problem~\eqref{eq:LemmaPyrWidth2_Final2} for any $\alpha>0$, and since $\inprod{\bc}{\bz}>0$ we obtain that $\inprod{\bc}{{\bx}_{\alpha}}\rightarrow\infty$ as $\alpha\rightarrow\infty$, and thus $Z^*=\infty$. However, since $V$ contains a finite number of points, the LHS of \eqref{eq:LemmaPyrWidth2_Final} is bounded from above, and so $Z^*<\infty$ in contradiction. Therefore, there exists $i\in \overline{J}$ such that $\bA_i\bz>0$.
Since $\bz\neq 0$, the vector $\overline{\bx}=\frac{\bz}{\norm{\bz}}\frac{\Omega_X}{|U|}$ is well defined. Moreover, $\overline{\bx}$ satisfies   \begin{equation}\label{eq:Feasible_xbar1}\bA_J\overline{\bx}=\frac{\Omega_X}{\norm{\bz}|U|}{\bA_J\bz}\leq 0,\end{equation} and
\begin{equation}\label{eq:Feasible_xbar2}
\begin{aligned}[b]
\bA_i\overline{\bx}=\bA_i\bz\frac{\Omega_X}{|U|\norm{\bz}}&\leq \norm{\bA_i}\norm{\bz}\frac{\zeta}{|U|\norm{\bz} \varphi  }\leq \frac{\zeta}{|U|},\quad \forall i\in \overline{J},
\end{aligned}
\end{equation}
where the first inequality follows from the Cauchy-Schwartz inequality and the second inequality follows from the fact that if $i\in\overline{J}$, then $i\notin I(V)$ and so $\norm{\bA_i}\leq\varphi$.
Consequently, \eqref{eq:Feasible_xbar1} and \eqref{eq:Feasible_xbar2} imply that $\overline{\bx}$ is a feasible solution for problem \eqref{eq:LemmaPyrWidth2_Final2}. Therefore, $Z^*\geq\inprod{\bc}{\overline{\bx}}$, which by \eqref{eq:LemmaPyrWidth2_Final} yields
\begin{equation*}\label{eq:LemmaPyrWidth2_Result}
\max_{\bp\in V, \bu\in U}\inprod{\bc}{\bp-\bu}
\geq \inprod{\bc}{\overline{\bx}}=\frac{\Omega_X}{|U|}\frac{\inprod{\bc}{\bz}}{\norm{\bz}}.
\end{equation*}
\end{proof}
The constant $\Omega_X$ represents a normalized minimal distance between the hyperplanes that contain facets of $X$ and the vertices of $X$ which do not lie on those hyperplanes. We will refer to $\Omega_X$ as \emph{the vertex-facet distance of $X$}. Examples for the derivation of $\Omega_X$ for some simple polyhedral sets can be found in Section~\ref{sec:FindingOmegaForPolyhedrons}.

The following lemma is a technical result stating that the active constraints at a given point are the same as the active constraints of the set of vertices in its compact representation.
\begin{lemma}\label{lemma:Ix_IU}
	Let $\bx\in X$ and the set $U\subseteq V$ satisfy $\bx=\sum_{\bv\in U} \mu_\bv\bv$, where $\bmu\in \Delta^+_{|U|}$. Then $I(\bx)=I(U)$.
\end{lemma}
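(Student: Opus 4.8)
The plan is to prove the two inclusions separately, with the easy direction being $I(U) \subseteq I(\bx)$ and the substantive direction being $I(\bx) \subseteq I(U)$. For the first inclusion: if $i \in I(U)$, then $\bA_i \bv = a_i$ for every $\bv \in U$, so by taking the convex combination $\bx = \sum_{\bv \in U} \mu_\bv \bv$ and using linearity of $\bA_i$ together with $\sum_{\bv \in U}\mu_\bv = 1$, I get $\bA_i \bx = \sum_{\bv \in U}\mu_\bv \bA_i \bv = \sum_{\bv \in U}\mu_\bv a_i = a_i$, hence $i \in I(\bx)$. This direction does not even use positivity of the coefficients.

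For the reverse inclusion $I(\bx) \subseteq I(U)$: suppose $i \in I(\bx)$, so $\bA_i \bx = a_i$. Since $\bx \in X$ we know $\bA_i \bv \le a_i$ for every $\bv \in V \supseteq U$, and in particular for every $\bv \in U$. Writing $a_i = \bA_i \bx = \sum_{\bv \in U} \mu_\bv \bA_i \bv$, I have a convex combination (with \emph{strictly positive} weights $\mu_\bv$, since $\bmu \in \Delta^+_{|U|}$) of the numbers $\bA_i \bv$, each of which is at most $a_i$, and whose weighted average equals $a_i$. Hence each term must attain the maximum: $\bA_i \bv = a_i$ for all $\bv \in U$. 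Concretely, $0 = a_i - \bA_i\bx = \sum_{\bv\in U}\mu_\bv(a_i - \bA_i\bv)$ is a sum of nonnegative terms, each with a strictly positive coefficient, so every $a_i - \bA_i\bv$ vanishes. Therefore $i \in I(U) = \bigcap_{\bv \in U} I(\bv)$.

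Combining the two inclusions gives $I(\bx) = I(U)$. The only place where the hypothesis $\bmu \in \Delta^+_{|U|}$ (as opposed to merely $\Delta_{|U|}$) is essential is the reverse inclusion — without strict positivity a vertex $\bv$ with $\mu_\bv = 0$ could fail to satisfy $\bA_i \bv = a_i$ while $\bx$ still lies on that facet. There is no real obstacle here; this is a short averaging argument, and the entire lemma is essentially the observation that a point in the relative interior of $\conv(U)$ lies on exactly those facets of $X$ that contain all of $U$.
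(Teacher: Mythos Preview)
Your proof is correct and follows essentially the same approach as the paper: both directions are handled identically, with the paper phrasing the reverse inclusion as a contradiction and you phrasing it directly via ``a sum of nonnegative terms with strictly positive weights equals zero forces each term to vanish.'' One minor wording slip: the inequality $\bA_i\bv\le a_i$ holds because each $\bv\in U\subseteq V\subseteq X$, not because $\bx\in X$; but this does not affect the argument.
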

\begin{proof}
It is trivially true that $I(U)\subseteq I(\bx)$ since $\bx$ is a convex combination of points in the affine space defined by $\paren{\by:\bA_{I(U)}\by=\ba_{I(U)}}$. We will prove that $I(\bx)\subseteq I(U)$. Any $\bv\in U\subseteq X$ satisfies $\bA_{I(\bx)}\bv\leq\ba_{I(\bx)}$. Assume to the contrary, that there exists $i\in I(\bx)$ such that some $\bu\in U$ satisfies  $\bA_i\bu<a_i$. Since $\mu_\bu>0$ and $\sum_{\bv\in U} \mu_\bv=1$, it follows that
	$$\bA_{i}\bx=\sum_{\bv\in U} \mu_\bv\bA_i\bv< \sum_{\bv\in U} \mu_\bv a_i =a_i,$$
	in contradiction to the assumption that $i\in I(\bx)$.	
\end{proof}
\begin{cor}\label{cor:boundOnInnerProd}
For any $\bx\in X/X^*$ which can be represented as $\bx=\sum_{\bv\in U} \mu_\bv\bv$ for some $\bmu\in \Delta^+_{|U|}$ and $U\subseteq V$, it holds that,
$$\max_{\bu\in U,\bp\in V} \inprod{\nabla f(\bx)}{\bu-\bp}\geq \frac{\Omega_X}{|U|}  \max_{\bx^*\in X^*}\frac{\inprod{\nabla f(\bx)}{\bx-\bx^*}}{\norm{\bx-\bx^*}}.$$
\end{cor}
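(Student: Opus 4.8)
The plan is to reduce the statement to a single application of Lemma~\ref{lemma:PyrWidth2}, with the vector $\bc$ taken to be $-\nabla f(\bx)$ and the direction $\bz$ taken to point from the current iterate toward an optimal solution. Since the left-hand side of the claimed inequality does not depend on $\bx^*$, it suffices to fix an arbitrary $\bx^*\in X^*$ and prove the bound with the maximum over $X^*$ on the right replaced by that single $\bx^*$; taking the maximum at the very end then finishes the argument. So fix $\bx^*\in X^*$ and set $\bz=\bx^*-\bx$; because $\bx\in X/X^*$ we have $\bx\neq\bx^*$, hence $\bz\neq\bzero$.

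The core of the proof is verifying the two hypotheses of Lemma~\ref{lemma:PyrWidth2}, namely $\bA_{I(U)}\bz\leq \bzero$ and $\inprod{\bc}{\bz}>0$. For the first, I would invoke Lemma~\ref{lemma:Ix_IU} — applicable since $\bmu\in\Delta^+_{|U|}$ — to get $I(U)=I(\bx)$; then for each $i\in I(\bx)$ one has $\bA_i\bx=a_i$ while $\bA_i\bx^*\leq a_i$ by feasibility of $\bx^*$, so $\bA_i\bz=\bA_i\bx^*-a_i\leq 0$. For the second, convexity of $f$ gives $f(\bx^*)\geq f(\bx)+\inprod{\nabla f(\bx)}{\bx^*-\bx}$, i.e. $\inprod{\nabla f(\bx)}{\bx-\bx^*}\geq f(\bx)-f^*$, which is strictly positive because $\bx\notin X^*$; hence $\inprod{\bc}{\bz}=\inprod{-\nabla f(\bx)}{\bx^*-\bx}=\inprod{\nabla f(\bx)}{\bx-\bx^*}>0$.

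With both hypotheses established, Lemma~\ref{lemma:PyrWidth2} yields $\max_{\bp\in V,\bu\in U}\inprod{-\nabla f(\bx)}{\bp-\bu}\geq \frac{\Omega_X}{|U|}\frac{\inprod{-\nabla f(\bx)}{\bx^*-\bx}}{\norm{\bx^*-\bx}}$. Rewriting the left-hand side as $\max_{\bu\in U,\bp\in V}\inprod{\nabla f(\bx)}{\bu-\bp}$ and the right-hand side as $\frac{\Omega_X}{|U|}\frac{\inprod{\nabla f(\bx)}{\bx-\bx^*}}{\norm{\bx-\bx^*}}$ gives the desired inequality for this particular $\bx^*$, and taking the maximum over $\bx^*\in X^*$ on the right (the left being independent of $\bx^*$) completes the proof.

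The step that requires the most care — and the only one that is not pure bookkeeping — is choosing the orientations consistently: $\bz$ must run from $\bx$ to $\bx^*$ so that the rows of $\bA$ active at $\bx$ produce the sign condition $\bA_{I(U)}\bz\leq\bzero$, while $\bc$ must be $-\nabla f(\bx)$ so that $\inprod{\bc}{\bz}$ equals the strictly positive suboptimality-type quantity $\inprod{\nabla f(\bx)}{\bx-\bx^*}$. Once these signs are pinned down, the two structural lemmas (Lemma~\ref{lemma:PyrWidth2} for the geometric estimate and Lemma~\ref{lemma:Ix_IU} to identify $I(U)$ with $I(\bx)$) do all the work.
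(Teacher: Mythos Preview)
Your proposal is correct and follows essentially the same approach as the paper: set $\bc=-\nabla f(\bx)$ and $\bz=\bx^*-\bx$, use Lemma~\ref{lemma:Ix_IU} to identify $I(U)=I(\bx)$ and verify $\bA_{I(U)}\bz\leq\bzero$, use convexity and $\bx\notin X^*$ to get $\inprod{\bc}{\bz}>0$, and then apply Lemma~\ref{lemma:PyrWidth2}. The only cosmetic difference is that you make the final step of maximizing over $\bx^*\in X^*$ explicit, whereas the paper leaves it implicit in the phrase ``for any $\bx^*\in X^*$''.
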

\begin{proof}
For any $\bx\in X/X^*$ define $\bc=-\nabla f(\bx)$. It follows from Lemma~\ref{lemma:Ix_IU} that $I(U)=I(\bx)$.
For any $\bx^*\in X^*$, the vector $\bz=\bx^*-\bx$ satisfies  $$\bA_{I(U)}\bz=\bA_{I(\bx)}\bz=\bA_{I(\bx)}\bx^*-\bA_{I(\bx)}\bx\leq \ba_{I(\bx)}-\ba_{I(\bx)}=\bzero,$$ and, from the convexity of $f$, as well as the optimality of $\bx^*$, $\inprod{\bc}{\bz}=-\inprod{\nabla f(\bx)}{\bx^*-\bx}\geq f(\bx)-f(\bx^*)>0$. Therefore, invoking Lemma~\ref{lemma:PyrWidth2} achieves the desired result.
\end{proof}

We now present the main theorem of this section, which establishes the linear rate of convergence of ASCG for problem~\eqref{eq:Problem}. This theorem is an extension of \cite[Thorem 7]{Jaggi2013}, and the proof follows the same general arguments, while incorporating the use of the error bound from Lemma~\ref{lemma:ErrBound} and the new constant $\Omega_X$.
\begin{theorem}\label{thm:ConvergenceOfASCG}
Let $\{\bx^k\}_{k \geq 1}$ be the sequence generated by the ASCG algorithm for solving problem \eqref{eq:Problem} using a representation reduction to procedure $\mathcal{R}$ with constant $N$, and let $f^*$ be the optimal value of the problem. Then for any $k\geq 1$
\begin{equation} \label{613} f(\bx^k)-f^*\leq C(1-\alpha^{\dag})^{(k-1)/2},\end{equation}
where
\begin{equation} \label{defalpha} \alpha^\dag=\min \left \{ \frac{(\Omega_X)^2}{8\rho\kappa D^2N^2}, \frac{1}{2} \right \},\end{equation}
 $\kappa=\theta^2 \left(\norm{\bb}D+3GD_\bE+\frac{2(G^2+1)}{\sigma_g}\right)$ with
	$\theta$ being the Hoffman constant associated with matrix $\left[\bA^T,\bE^T,\bb\right]^T$, $C=GD_\bE+\norm{\bb}D$, and $\Omega_X$ is the vertex-facet distance of $X$ given in (\ref{defOmega}).
\end{theorem}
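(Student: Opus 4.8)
The plan is to follow the standard Frank–Wolfe/away-step descent argument, now fed by the two new ingredients from the excerpt: the error bound of Lemma~\ref{lemma:ErrBound} (playing the role of strong convexity) and the geometric inequality of Corollary~\ref{cor:boundOnInnerProd} (playing the role of the pyramidal-width lower bound). First I would fix an iteration $k$ with $\bx^k\notin X^*$ and write $h_k := f(\bx^k)-f^*$. The descent lemma (Lemma~\ref{lemma:DescentLemma}) applied along the chosen direction $\bd^k$ with the adaptive stepsize gives, whenever $\gamma^k=-\inprod{\nabla f(\bx^k)}{\bd^k}/(\rho\norm{\bd^k}^2)\le\overline\gamma^k$ (a ``good step''), the guaranteed decrease
\begin{equation*}
f(\bx^{k+1})\le f(\bx^k)-\frac{1}{2\rho}\frac{\inprod{\nabla f(\bx^k)}{-\bd^k}^2}{\norm{\bd^k}^2}\le f(\bx^k)-\frac{1}{2\rho D^2}\inprod{\nabla f(\bx^k)}{-\bd^k}^2,
\end{equation*}
using $\norm{\bd^k}\le D$ (both $\bp^k-\bx^k$ and $\bx^k-\bu^k$ are differences of points in $X$). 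For the exact line search the decrease is at least as large, so the unified bound holds. The quantity $\inprod{\nabla f(\bx^k)}{-\bd^k}$ is, by the choice rule in Step~3 of ASCG, at least $\tfrac12\max_{\bp\in V,\bu\in U^k}\inprod{\nabla f(\bx^k)}{\bp-\bu}$ (the max is split into a forward part and an away part, and $\bd^k$ captures the larger of the two).

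Next I would chain the inequalities. By Corollary~\ref{cor:boundOnInnerProd} with $|U^k|\le N$,
\begin{equation*}
\max_{\bp\in V,\bu\in U^k}\inprod{\nabla f(\bx^k)}{\bp-\bu}\ge \frac{\Omega_X}{N}\max_{\bx^*\in X^*}\frac{\inprod{\nabla f(\bx^k)}{\bx^k-\bx^*}}{\norm{\bx^k-\bx^*}}\ge \frac{\Omega_X}{N}\cdot\frac{h_k}{d(\bx^k,X^*)},
\end{equation*}
where the last step uses convexity, $\inprod{\nabla f(\bx^k)}{\bx^k-\bx^*}\ge h_k$, and picks $\bx^*$ the projection of $\bx^k$ onto $X^*$. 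Now invoke the error bound $d(\bx^k,X^*)^2\le\kappa h_k$ to replace $d(\bx^k,X^*)$ by $\sqrt{\kappa h_k}$, giving
$\max_{\bp,\bu}\inprod{\nabla f(\bx^k)}{\bp-\bu}\ge \frac{\Omega_X}{N}\sqrt{h_k/\kappa}$. Substituting back,
\begin{equation*}
h_k-h_{k+1}\ge \frac{1}{2\rho D^2}\Bigl(\tfrac12\cdot\tfrac{\Omega_X}{N}\sqrt{h_k/\kappa}\Bigr)^2=\frac{\Omega_X^2}{8\rho\kappa D^2 N^2}\,h_k,
\end{equation*}
i.e.\ $h_{k+1}\le(1-\alpha)h_k$ with $\alpha=\Omega_X^2/(8\rho\kappa D^2N^2)$ on every ``good step''.

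The remaining issue — and the step I expect to be the main obstacle — is handling iterations that are \emph{not} good steps, namely the drop steps, where the adaptive formula would overshoot $\overline\gamma^k$ and one is forced to take $\gamma^k=\overline\gamma^k$ (nullifying a vertex). On such steps I can only guarantee $f(\bx^{k+1})\le f(\bx^k)$ (monotonicity, Remark~\ref{remark:monotone}), not geometric decrease. The classical accounting argument of Lacoste-Julien–Jaggi resolves this: a drop step reduces $|U^k|$ by one, while a forward step increases it by at most one and an away non-drop step keeps it fixed; starting from $|U^1|=1$, the number of drop steps up to iteration $k$ is at most the number of good (forward) steps, so at least $\lceil(k-1)/2\rceil$ of the first $k-1$ iterations are good steps. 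Hence $h_k\le(1-\alpha)^{(k-1)/2}h_1\le C(1-\alpha)^{(k-1)/2}$ using $h_1\le C$ from Lemma~\ref{lemma:OFUpperBound}. Finally, truncating $\alpha$ at $1/2$ to form $\alpha^\dag$ only weakens the bound (it is needed to keep $1-\alpha^\dag\in(0,1)$ when $\Omega_X^2/(8\rho\kappa D^2N^2)$ happens to exceed $1/2$, and also to absorb the edge case where the adaptive stepsize is capped but one can still show a $\tfrac12 h_k$-type decrease), which yields exactly \eqref{613}. I would double-check the bookkeeping that $N$ is a legitimate uniform bound on $|U^k|$ — this is where the representation reduction procedure $\mathcal{R}$ with constant $N$ enters — and that the first iteration, where $U^1$ is a singleton, is consistent with the counting.
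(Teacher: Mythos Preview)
Your proposal is correct and follows essentially the same route as the paper: descent lemma on the chosen direction, the $\tfrac12$-splitting $\langle\nabla f(\bx^k),-\bd^k\rangle\ge\tfrac12\langle\nabla f(\bx^k),\bu^k-\bp^k\rangle$, then Corollary~\ref{cor:boundOnInnerProd} combined with the error bound of Lemma~\ref{lemma:ErrBound} for the ``good'' (non-drop) iterations, the $\tfrac12$-decrease for capped forward steps, and the Lacoste-Julien--Jaggi counting to show drop steps number at most $(k-1)/2$. Two cosmetic points: you consistently write $\max\langle\nabla f(\bx^k),\bp-\bu\rangle$ where it should be $\bu-\bp$ (that is what Corollary~\ref{cor:boundOnInnerProd} bounds, and what $\langle\nabla f,\bu^k-\bp^k\rangle$ equals), and your use of ``good step'' drifts between ``uncapped step'' and ``non-drop step''---the counting shows non-drop steps are at least $(k-1)/2$, and it is for \emph{all} non-drop steps (both uncapped and capped-forward) that the $(1-\alpha^\dag)$ contraction holds.
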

\begin{proof}
For each $k$ we will denote the stepsize generated by exact line search as $\gamma_e^k$ and the adaptive stepsize as $\gamma_a^k$. Then
	\begin{equation}\label{eq:RatioFunctionStepSizes2}
	\begin{aligned}		
		f(\bx^k+\gamma_e^k\bd^k)&\leq f(\bx^{k+1})\leq f(\bx^k+\gamma_a^k\bd^k).
	\end{aligned}
	\end{equation}	
From Lemma~\ref{lemma:DescentLemma} (the descent lemma), we have that
	\begin{equation}\label{eq:RatioFunctionStepSizes3}
	\begin{aligned}				
		 f(\bx^k+\gamma_a^k\bd^k)\leq f(\bx^k)+\gamma_a^k\inprodmy{\nabla f(\bx^k)}{\bd^k}+\frac{(\gamma_a^k)^2\rho}{2}\|\bd^k\|^2.
	\end{aligned}
	\end{equation}
Assuming that $\bx^k\notin X^*$, then for any $\bx^*\in X^*$ we have that
\begin{equation}\label{eq:BoundOnInnerProdWithdk}
\begin{aligned}[b]
\inprodmy{\nabla f(\bx^k)}{\bd^k}&=\min\paren{ \inprodmy{\nabla f(\bx^k)}{\bp^k-\bx^k}, \inprodmy{\nabla f(\bx^k)}{\bx^k-\bu^k}}\\
&\leq \inprodmy{\nabla f(\bx^k)}{\bp^k-\bx^k}\\
&\leq \inprodmy{\nabla f(\bx^k)}{\bx^*-\bx^k}\\
&\leq f^*-f(\bx^k),
\end{aligned}
\end{equation}
where the first equality is derived from the algorithm's specific choice of $\bd^k$, the third line follows from the fact that $\bp^k=\tilde{\mathcal{O}}_X(\nabla f(\bx^k))$, and the fourth line follows from the convexity of $f$.
In particular, $\bd^k\neq \bzero$, and by \eqref{eq:StepSize} it follows that $\gamma_a^k$ is equal to
\begin{equation}\label{eq:OptimalStepSize}
\gamma_a^k=\min\paren{-\frac{\inprodmy{\nabla f(\bx^k)}{\bd^k}}{\rho \normmy{\bd^k}^2},\overline{\gamma}^k}.\end{equation}
We now separate the analysis to three cases: (a) $\bd^k=\bp^k-\bx^k$ and $\gamma_a^k=\overline{\gamma}^k$, (b) $\bd^k=\bx^k-\bu^k$ and $\gamma_a^k=\overline{\gamma}^k$, and (c) $\gamma_a^k<\overline{\gamma}^k$.

In cases (a) and (b), it follows from \eqref{eq:OptimalStepSize} that
\begin{equation}\label{eq:CaseStepSizeMaximal}
\overline{\gamma}^k\rho\normmy{\bd^k}^2\leq -\inprodmy{\nabla f(\bx^k)}{\bd^k}.
\end{equation}
Using inequalities \eqref{eq:RatioFunctionStepSizes2}, \eqref{eq:RatioFunctionStepSizes3} and \eqref{eq:CaseStepSizeMaximal}, we obtain
\begin{equation*}\label{eq:RatioFunctionMaximalStepSizes0}
	\begin{aligned}		
		f(\bx^{k+1})&\leq f(\bx^k)+\gamma_a^k\inprodmy{\nabla f(\bx^k)}{\bd^k}+\frac{(\gamma_a^k)^2\rho}{2}\normmy{\bd^k}^2\\
		&\leq  f(\bx^k)+\frac{\overline{\gamma}^k}{2}\inprodmy{\nabla f(\bx^k)}{\bd^k}.
\end{aligned}
\end{equation*}
Subtracting $f^*$ from both sides of the inequality and using \eqref{eq:BoundOnInnerProdWithdk}, we have that
	\begin{equation}\label{eq:RatioFunctionMaximalStepSizes}
	\begin{aligned}[b]	
		f(\bx^{k+1})-f^*
		&\leq  f(\bx^k)-f^*+\frac{\overline{\gamma}^k}{2}\inprodmy{\nabla f(\bx^k)}{\bd^k}\\
		&\leq (f(\bx^k)-f^*)\left(1-\frac{\overline{\gamma}^k}{2}\right).
	\end{aligned}
	\end{equation}
In case (a), $\overline{\gamma}^k=1$, and hence
\begin{equation} \label{666} f(\bx^{k+1})-f^*\leq  \frac{f(\bx^k)-f^*}{2}.\end{equation}
In case (b), we have no positive lower bound on $\overline{\gamma}^k$, and therefore we can only conclude, by the nonnegativity of $\overline{\gamma}^k$, that
$$f(\bx^{k+1})-f^* \leq  {f(\bx^k)-f^*}.$$
However, case (b) is a drop step, meaning in particular that $|U^{k+1}|\leq |U^{k}|-1$, since before applying the representation reduction procedure $\mathcal{R}$, we eliminate one of the vertices in the representation of $\bx^{k}$. Denoting the number of drop steps until iteration $k$ as $s^k$, and the number of forward steps until iteration $k$ as $l^k$, it follows from the algorithm's definition that $l^k+s^k\leq k-1$ (at each iteration we add a vertex, remove a vertex, or neither) and $s^k\leq l^k$ (the number of removed vertices can not exceed the number of added vertices), and therefore $s^k\leq (k-1)/2$.

We arrive to case (c). In this case, \eqref{eq:OptimalStepSize} implies
$$\gamma^k_a=-\frac{\inprodmy{\nabla f(\bx^k)}{\bd^k}}{\rho \normmy{\bd^k}^2},$$
which combined with \eqref{eq:RatioFunctionStepSizes2} and \eqref{eq:RatioFunctionStepSizes3} results in
	\begin{equation}\label{eq:RatioFunctionNonMaximalStepSizes}
			f(\bx^{k+1})
		\leq f(\bx^k)+\gamma_a^k\inprodmy{\nabla f(\bx^k)}{\bd^k}+\frac{(\gamma_a^k)^2\rho}{2}\normmy{\bd^k}^2
		=f(\bx^k)-\frac{\inprodmy{\nabla f(\bx^k)}{\bd^k}^2}{2\rho \normmy{\bd^k}^2}.
	\end{equation}
From the algorithm's specific choice of $\bd^k$, we obtain that
\begin{equation}\label{eq:upperBoundOnInnerProd}
\begin{aligned}
0\geq \inprodmy{\nabla f(\bx^k)}{\bp^k-\bu^k}&=\inprodmy{\nabla f(\bx^k)}{\bp^k-\bx^k}+\inprodmy{\nabla f(\bx^k)}{\bx^k-\bu^k}\\
&\geq 2\inprodmy{\nabla f(\bx^k)}{\bd^k}.
\end{aligned}
\end{equation}
Applying the bound in \eqref{eq:upperBoundOnInnerProd} and the inequality $\norm{\bd^k}\leq D$ to \eqref{eq:RatioFunctionNonMaximalStepSizes}, it follows that
\begin{equation}\label{eq:RatioFunctionNonMaximalStepSizes2}
		f(\bx^{k+1})
		\leq f(\bx^k)-\frac{\inprodmy{\nabla f(\bx^k)}{\bd^k}^2}{2\rho \normmy{\bd^k}^2}\leq f(\bx^k)-\frac{\inprodmy{\nabla f(\bx^k)}{\bp^k-\bu^k}^2}{8\rho D^2}.
\end{equation}
By the definitions of $\bu^k$ and $\bp^k$, and since applying representation reduction procedure $\mathcal{R}$ ensures that that $|U^k|\leq N$, Corollary~\ref{cor:boundOnInnerProd} implies that for any $\bx^*\in X^*$,
\begin{equation}\label{eq:UpperBoundInnerProduct2}
\inprodmy{\nabla f(\bx^k)}{\bu^k-\bp^k}=\max_{\bp\in V, \bu\in U^k}\inprodmy{\nabla f(\bx^k)}{\bu-\bp}
\geq \frac{\Omega_X}{N}\frac{\inprodmy{\nabla f(\bx^k)}{\bx^k-\bx^*}}{\normmy{\bx^k-\bx^*}}.
\end{equation}
Lemma~\ref{lemma:ErrBound} implies that there exists $\bx^*\in X^*$ such that $\normmy{\bx^k-\bx^*}^2\leq \kappa (f(\bx^k)-f^*)$, which combined with
convexity of $f$, bounds  \eqref{eq:UpperBoundInnerProduct2} from below as follows:
\begin{equation*}\label{eq:UpperBoundInnerProduct3}
\begin{aligned}[b]
\inprodmy{\nabla f(\bx^k)}{\bu^k-\bp^k}^2
&\geq \left(\frac{\Omega_X}{N}\right)^2\frac{\inprod{\nabla f(\bx^k)}{\bx^k-\bx^*}^2}{\norm{\bx^k-\bx^*}^2}\\
&\geq \left(\frac{\Omega_X}{N}\right)^2\frac{(f(\bx^k)-f(\bx^*))^2}{\norm{\bx^k-\bx^*}^2}\\
&\geq \left(\frac{\Omega_X}{N}\right)^2\frac{(f(\bx^k)-f^*)^2}{\kappa(f(\bx^k)-f^*)}\\
&=\frac{(\Omega_X)^2}{N^2\kappa}(f(\bx^k)-f^*),
\end{aligned}
\end{equation*}
which along with \eqref{eq:RatioFunctionNonMaximalStepSizes2} yields
\begin{equation}\label{eq:RatioFunctionNonMaximalStepSizes3}
	\begin{aligned}[b]		
		f(\bx^{k+1})-f^*		
		&\leq f(\bx^k)-f^*-\frac{\inprodmy{\nabla f(\bx^k)}{\bu^k-\bp^k}^2}{8\rho D^2}\\
		&\leq (f(\bx^k)-f^*)\left(1-\frac{(\Omega_X)^2}{8\rho\kappa D^2N^2}\right)
\end{aligned}
\end{equation}
Therefore, if either of the cases (a) or (c) occurs, then by (\ref{666}) and \eqref{eq:RatioFunctionNonMaximalStepSizes3}, it follows that
\begin{equation} \label{718} f(\bx^{k+1})-f^* \leq (1-\alpha^{\dag}) (f(\bx^k)-f^*),\end{equation}
where $\alpha^{\dag}$ is defined in (\ref{defalpha}).  We can therefore conclude from cases (a)-(c) that until iteration $k$ we have at least $\frac{k-1}{2}$ iterations for which  (\ref{718}) holds, and therefore
\begin{equation} \label{eq:RatioFunctionNonMaximalStepSizes4}
	\begin{aligned}		
		f(\bx^{k})-f^*		
		&\leq (f(\bx^1)-f^*)(1-\alpha^\dag)^{(k-1)/2}.
\end{aligned}
\end{equation}
Applying Lemma~\ref{lemma:OFUpperBound} for $\bx=\bx^1$ we obtain $f(\bx^1)-f^*\leq C$, and the desired result (\ref{613}) follows.
\end{proof}

\subsection{Examples of Computing the Vertex-Facet Distance $\Omega_X$}\label{sec:FindingOmegaForPolyhedrons}
In this section, we demonstrate how to compute the vertex-facet distance constant $\Omega_X$ for a few simple polyhedral sets. We consider three sets: the unit simplex, the $\ell_1$ ball and the $\ell_\infty$ ball. We first describe each of the sets as a system of linear inequalities of the form $X=\paren{\bx:\bA\bx\leq \ba}$. Then, given the parameters $\bA$ and $\ba$, as well as the vertex set $V$, $\Omega_X$ can be computed by its definition, given by (\ref{defOmega}).

\emph{\underline{The unit simplex.}} The unit simplex $\Delta_n$ can be represented by
\begin{equation}
\begin{aligned}[b]
\bA=\begin{bmatrix}-\bI_{n \times n}\\\bone_n^T\\-\bone_n^T\end{bmatrix}\in \Real^{(n+2)\times n},\; \ba=\begin{bmatrix}\bzero_n\\1\\1\end{bmatrix}\in\Real^{(n+2)}.
\end{aligned}
\end{equation}
The set of extreme points is given by $V=\paren{\be_i}_{i=1}^n$. Notice that since there are only $n$ extreme points which are all affinely independent, using a rank reduction procedure which implements the Carath\'{e}odory theorem is the same as applying the trivial procedure that does not change the representation. In order to calculate $\Omega_X$, we first note that $I(V)=\{n+1,n+2\}$, and therefore
$$\varphi=\max_{i\in\paren{1,\ldots,n}} \norm{\bA_i}=\max_{i\in\paren{1,\ldots,n}} \norm{\be_i}=1$$
and
$$\zeta=\min\limits_{\bv\in \paren{\be_j}_{j=1}^n ,i\in\paren{1,\ldots,n}:-\inprod{\be_i}{\bv}<0}\inprod{\be_i}{\bv}=\min\limits_{i\in\paren{1,\ldots,n}}\norm{\be_i}^2=1,$$
which means that $\Omega_X=\frac{\zeta}{\varphi}=1$.

\emph{\underline{The $\ell_1$ ball.}} The $\ell_1$ ball is given by the set  $${X}=\paren{\bx\in\Real^n:\sum\limits_{i=1}^n |x_i|\leq 1}=\paren{\bx \in \Real^n:\inprod{\bw}{\bx}\leq 1,\forall \bw\in\paren{-1,1}^n}.$$
Therefore $\ba=\bone\in\Real^{2^n}$ and each row of the matrix $\bA\in \Real^{2^n\times n}$ is a vector in $\paren{-1,1}^n$. The set of extreme points is given by $V=\paren{\be_i}_{i=1}^n\bigcup \paren{-\be_i}_{i=1}^n$, and therefore has cardinality of $|V|=2n$.

Finally, we have that
$$\varphi=\max_{i\in\paren{1,\ldots,2^n}} \norm{\bA_i}=\sqrt{n}$$
and
\begin{equation*}
\begin{aligned}
\zeta&=\min_{\bv\in V,\bw\in\{-1,1\}^n: \inprod{\bv}{\bw}<1}(1-\inprod{\bv}{\bw})\\
&=\min_{i\in\paren{1,\ldots,n},\;\bw\in\{-1,1\}^n: \inprod{\be_i}{\bw}<1}(1-\inprod{\be_i}{\bw})\\
&=\min_{\bw\in\{-1,1\}^n}(1+|w_i|)=2,
\end{aligned}
\end{equation*}
which means that $\Omega_X=\frac{\zeta}{\varphi}=\frac{2}{\sqrt{n}}$.

\emph{\underline{The $\ell_\infty$ ball.}} The $\ell_\infty$ ball is represented by
\begin{equation}
\begin{aligned}[b]
\bA=\begin{bmatrix}\bI \\-\bI \end{bmatrix}\in \Real^{2n\times n},\; \ba=\begin{bmatrix}\bone\\\bone\end{bmatrix}\in\Real^{2n}.
\end{aligned}
\end{equation}
The set of extreme points is given by $V=\paren{-1,1}^n$, which in particular implies that $|V|=2^n$. Therefore, for large-scale problems, using the representation reduction procedure, which is based on Carath\'{e}odory theorem, is crucial in order to obtain a practical implementation.

From the definition of $\bA$ and $V$, it follows that
$$\varphi=\max_{i\in\paren{1,\ldots,2n}} \norm{\bA_i}=\max_{i\in\paren{1,\ldots,n}} \norm{\be_i}=1$$
and
$$\zeta=\min_{i\in\paren{1,\ldots,n},\;\bv\in\{-1,1\}^n: \inprod{\be_i}{\bv}<1}{(1-\inprod{\be_i}{\bv})}=2,$$
which implies that $\Omega_X=\frac{\zeta}{\varphi}=2$.
\bibliographystyle{abbrv}

\begin{appendices}
\section{Incremental Representation Reduction using the Carath\'{e}odory Theorem}\label{appx:Caratheodory}
In this section we will show a way to efficiently and incrementally implement the constructive proof of Carath\'{e}odory theorem, as part of the VRU scheme, at each iteration of the ASCG algorithm. We note that this reduction procedure does not have to be employed, and instead the trivial procedure, which does not change the representation can be used. In that case, the upper bound on the number of extreme points in the representation is just the number of extreme points of the feasible set $X$.  \\
\indent The implementation described in this section will allow maintaining a vertex representation set $U^k$, with cardinality of at most $n+1$, at a computational cost of $O(n^2)$ operations per iteration. For this purpose, we assume that at the beginning of iteration $k$, $\bx^{k}$ has a representation with vertex set $U^{k}=\paren{\bv^1,\ldots,\bv^{L}}\subseteq V$, such that the vectors in the set are affinely independent. Moreover, we assume that at the beginning of iteration $k$, we have at our disposal two matrices $\bT^k\in\Real^{n\times n}$ and ${\bW}^k\in\Real^{n\times(L-1)}$. We define $\bV^k\in \Real^{n\times(L-1)}$ to be the matrix whose $i$th column is the vector $\bw^i=\bv^{i+1}-\bv^1$ for $i=1, \ldots, L-1$, where $\bv^1$ is called the reference vertex. The matrix $\bT^k$ is a product of elementary matrices, which ensures that the matrix ${\bW}^k=\bT^k\bV^k$ is in row echelon form. The implementation does not require to save the matrix $\bV^k$, and so at each iteration, only the matrices $\bT^k$ and $\bW^k$ are updated.

 Let $U^{k+1}$ be the vertex set and let $\bmu^{k+1}$ be the coefficients vector at the end of iteration $k$, before applying the rank reduction procedure.  Updating the matrices $\bW^{k+1}$ and $\bT^{k+1}$, as well as $U^{k+1}$ and $\bmu^{k+1}$, is done according to the following \emph{Incremental Representation Reduction} scheme, which is partially based on the proof of Carath\'{e}odory theorem presented in \cite[Section 17]{Rockafellar1970}.
\begin{framed}\label{alg:Caratheodory}
	\setstretch{1.2}
	\noindent\underline{\bf Incremental Representation Reduction (IRR)}\\
	{\bf Input}: Representation $(U^{k+1},\bmu^{k+1})$ of point $\bx^{k+1}$, set $U^k=\paren{\bv^1,\ldots,\bv^{L}}$ of affinely\\ \indent~~~~~independent	vectors, and matrices $\bT^{k}\in\Real^{n\times n}$ and $\bW^{k}\in\Real^{n\times (L-1)}$.\\
	{\bf Output}: Updated representation $(U^{k+1},\bmu^{k+1})$ of $\bx^{k+1}$, and matrices $\bT^{k+1}\in\Real^{n\times n}$\\
	\indent~~~~~ and $\bW^{k+1}\in\Real^{n\times (|U_{k+1}|-1)}$.
	\vspace{\topsep}
	\begin{enumerate}
	\itemsep0em
	\item Set $L:=|U^{k}|$.
	\item Update $\bT^{k+1}:=\bT^k$.
	\item If $|U^{k+1}|=1$, then set the matrix $\bW^{k+1}$ to be empty and $\bT^{k+1}:=\bI$.
	\item Else, if $|U^{k+1}|= L$, then set $\bW^{k+1}:=\bW^{k}$.
	\item\label{step:IRR_DropStep} Else, if $|U^{k+1}|= L-1>1$ (drop step), then
		\vspace{-\topsep}
		\begin{enumerate}[label=(\alph*), ref=\arabic{enumi}(\alph*)]
			\itemsep0em
			\item Find $i^*\in\paren{1,\ldots,L}$ such that $\bv^{i^*}\in U^k/U^{k+1}$.
			\item \label{step:IRR_ChangeReferenceVertex} If $i^*=1$ (the reference vertex was removed), then
			remove the first column of $\bW^k$ and change reference vertex to $\bv^2$, using the update formula
			 $${\bW^{k+1}:=\bW^k\begin{bmatrix}\bzero& \bI_{(L-1)\times (L-1)}\end{bmatrix}^T+\bT^k(\bv^1-\bv^2)\bone^T},$$
			  where  $\bone,\bzero\in \Real^{L-1}$.	
			\item Else (a non-reference vertex was removed), remove column $i^*-1$ from $\bW^{k+1}$.
		\end{enumerate}
	\item Else, if $|U^{k+1}|=L+1$ (forward step), then
	\vspace{-\topsep}
	\begin{enumerate}[label=(\alph*), ref=\arabic{enumi}(\alph*)]
		\itemsep0em
		\item Find $\bv^{L+1}\in U^{k+1}/U^k$.
		\item Compute $\bw^{L}:=\bv^{L+1}-\bv^1$.
		\item Update the matrix $\bW^{k+1}:=[\bW^k,\bT^k\bw^{L}]$.
		\item \label{step:IRR_FindRowRank}Compute $M$ - the row rank of ${\bW}^{k+1}$.
		\item If $L>M$, then
		\vspace{-\topsep}
		\begin{enumerate}[label=\roman*., ref=\arabic{enumi}(\alph{enumii})\roman*]
		\itemsep0em
		\item \label{step:IRR_SolSysEq} Find a solution $\blambda$ of the following system \\
		$${\bW}^{k+1}\blambda=\bzero,\; \lambda_{L}=-1.$$
		\item Set the vector $\tilde{\blambda}\in \Real^{L+1}$ to be \\		
		$$\tilde{\lambda}_i:=\begin{cases}
		-\sum_{i=2}^{L+1} \lambda_{i-1} & i=1\\
		\lambda_{i-1}& i=2,\ldots,L+1\\
		\end{cases}.$$
		\item Compute $\overline{\alpha}:=\min_{i:\tilde{\lambda}_i<0} -\frac{\mu^k_i}{\tilde{\lambda}_i}$ and $\underline{\alpha}:=\max_{i:\tilde{\lambda}_i>0} -\frac{\mu^k_i}{\tilde{\lambda}_i}$ and set\\		
		$$\alpha=\begin{cases}
		\overline{\alpha} & \tilde{\lambda}_1\geq 0\\
		\underline{\alpha} & \tilde{\lambda}_1<0.
		\end{cases}.$$		
		\item Update $\mu^{k+1}_{\bv^i}:=\mu^{k+1}_{\bv^i}+\alpha \tilde{\lambda}_i$ for all $i=1,\ldots, L+1$.
		\item \label{step:IRR_FindNullSet} Compute $I=\paren{i\in\paren{1,\ldots,L+1}:\mu^{k+1}_{\bv^i}=0}$.
		\item \label{step:IRR_RemoveVertices} For each $i\in I$ remove column $i-1$ matrix $\bW^{k+1}$.
		\item \label{step:IRR_UpdateRepresentation} Update $U^{k+1}=U^{k+1}/\paren{\bv_i}_{i\in I}$.
		\end{enumerate}
	\end{enumerate}
	\item \label{step:IRR_RowElimination}If ${\bW}^{k+1}$ is not in row echelon form, then construct a matrix $\tilde{\bT}$, as a composition of elementary matrices, such that $\tilde{\bT}{\bW}^{k+1}$ is row echelon form, and update ${\bW}^{k+1}:=\tilde{\bT}{\bW}^{k+1}$ and $\bT^{k+1}:=\tilde{\bT}\bT^{k+1}$.				
	\end{enumerate}	
	\vspace{-\topsep}
\end{framed}
Notice that in order to compute the row rank of the matrix $\bW^{k+1}$ in step~\ref{step:IRR_FindRowRank}, we may simply convert the matrix to row echelon form, and then count the number of nonzero rows. This is done similarly to step~\ref{step:IRR_RowElimination}, and requires ranking of at most one column. We will need to rerank the matrix in step~\ref{step:IRR_RowElimination} only if $L>M$, and subsequently at least one column is removed in step~\ref{step:IRR_RemoveVertices}.

The IRR scheme may reduce the size of the input $U^{k+1}$ only in the case of a forward step, since otherwise the vertices in $U^{k+1}$ are all affinely independent. Nonetheless, the IRR scheme \emph{must} be applied at each iteration in order to maintain the matrices ${\bW}^k$ and $\bT^k$.

The efficiency of the scheme relies on the fact that only a small number of vertices are either added to or removed from the representation. The potentially computationally expensive steps are: step \ref{step:IRR_ChangeReferenceVertex} - replacing the reference vertex, step~\ref{step:IRR_FindRowRank} - finding the row rank of $\bW^{k+1}$, step \ref{step:IRR_SolSysEq} - solving the system of linear equalities, step \ref{step:IRR_RemoveVertices} - removing columns corresponding with the vertices eliminated from the representation, and step \ref{step:IRR_RowElimination} - the ranking of the resulting matrix ${\bW}^{k+1}$. Step \ref{step:IRR_ChangeReferenceVertex} can be implemented without explicitly using matrix multiplication and therefore has a computational cost of $O(n^2)$. Since ${\bW}^k$ was in row echelon form, step~\ref{step:IRR_FindRowRank} requires a row elimination procedure, similar to step~\ref{step:IRR_RowElimination}, to be conducted only on the last column of  ${\bW}^{k+1}$, which involves at most $O(n)$ operations and an additional $O(n^2)$ operation for updating $\bT^{k+1}$. Moreover, since ${\bW}^k$ was full column rank, the IRR scheme guarantees that in step $\ref{step:IRR_SolSysEq}$ the vector $\blambda$ has a unique solution, and since ${\bW}^{k+1}$ is in row echelon form, it can be found in $O(n^2)$ operations. Moreover, in step \ref{step:IRR_RemoveVertices}, the specific choice of $\alpha$ ensures that the reference vertex $\bv^1$ is not eliminated from the representation, and so there is no need to change the reference vertex at this stage. Furthermore, it is reasonable to assume that the set $I$ satisfies $|I|=O(1)$, since otherwise the vector $\bx^{k+1}$, produced by a forward step, can be represented by significantly less vertices than $\bx^k$, which, although possible, is numerically unlikely.  Therefore, assuming that indeed $|I|=O(1)$, the matrix $\tilde{\bT}$, calculated in step \ref{step:IRR_RowElimination}, applies a row elimination procedure to at most $O(1)$ rows (one for each column removed from $\bW^{k+1}$) or one column (if a column was added to $\bW^{k+1}$). Conducting such an elimination on either row or column takes at most $O(n^2)$ operations, which may include row switching and at most $n$ row addition and multiplication. Therefore, the total computational cost of the IRR scheme amounts to $O(n^2)$.
\end{appendices}

\end{document}